\newtheorem{theorem}{Theorem}[section]
\newtheorem{lemma}[theorem]{Lemma}
\newtheorem{corollary}[theorem]{Corollary}
\newtheorem{problem}[theorem]{Problem}
\theoremstyle{remark}
\newtheorem{remark}[theorem]{Remark}
\newcommand{\vecalg}[1]{\boldsymbol{\mathsf{#1}}}
\newcommand{\matalg}[1]{\boldsymbol{\mathsf{#1}}}
\newcommand{\prodV}[2]{({\textstyle{#1}},{\textstyle{#2}})}
\newcommand{\prodS}[2]{\langle{\textstyle{#1}},{\textstyle{#2}}\rangle}
\newcommand{\im}{\imath}
\newcommand{\BV}{\mathbf{V}}
\newcommand{\bd}{\mathbf{d}}
\newcommand{\bn}{\mathbf{n}}
\newcommand{\bp}{\mathbf{p}}
\newcommand{\bq}{\mathbf{q}}
\newcommand{\bu}{\mathbf{u}}
\newcommand{\bv}{\mathbf{v}}
\newcommand{\bx}{\mathbf{x}}
\newcommand{\bzero}{\mathbf{0}}
\newcommand{\bphi}{\boldsymbol{\phi}}
\newcommand{\bnabla}{\boldsymbol{\nabla}}
\newcommand{\CF}{\mathcal{F}}
\newcommand{\CP}{\mathcal{P}}
\newcommand{\CT}{\mathcal{T}}
\newcommand{\TD}{\textup{D}}
\newcommand{\TI}{\textup{I}}
\newcommand{\TN}{\textup{N}}
\newcommand{\TR}{\textup{R}}
\newcommand{\TS}{\textup{S}}
\newcommand{\BCP}{\boldsymbol{\CP}}
\title{A hybridizable discontinuous Galerkin method with characteristic variables for Helmholtz problems\footnote{Published in \emph{Journal of Computational Physics} (doi: \href{https://doi.org/10.1016/j.jcp.2023.112459}{10.1016/j.jcp.2023.112459}). Distributed under \href{https://creativecommons.org/licenses/by/4.0/}{Creative Commons CC-BY 4.0} license.}}
\author[1]{Axel Modave}
\author[2]{Théophile Chaumont-Frelet}
\affil[1]{POEMS, CNRS, Inria, ENSTA Paris, Institut Polytechnique de Paris, 91120 Palaiseau, France, \href{mailto:axel.modave@ensta-paris.fr}{\texttt{axel.modave@ensta-paris.fr}}}
\affil[2]{Universit\'e C\^ote d'Azur, Inria, CNRS, LJAD, 06902 Sophia Antipolis Cedex, France, \href{mailto:theophile.chaumont@inria.fr}{\texttt{theophile.chaumont@inria.fr}}}
\date{}
\begin{document}

\maketitle

\begin{abstract}
A new hybridizable discontinuous Galerkin method, named the CHDG method, is proposed for solving time-harmonic scalar wave propagation problems.
This method relies on a standard discontinuous Galerkin scheme with upwind numerical fluxes and high-order polynomial bases.
Auxiliary unknowns corresponding to characteristic variables are defined at the interface between the elements, and the physical fields are eliminated to obtain a reduced system.
The reduced system can be written as a fixed-point problem that can be solved with stationary iterative schemes.
Numerical results with 2D benchmarks are presented to study the performance of the approach.
Compared to the standard HDG approach, the properties of the reduced system are improved with CHDG, which is more suited for iterative solution procedures.
The condition number of the reduced system is smaller with CHDG than with the standard HDG method.
Iterative solution procedures with CGNR or GMRES required smaller numbers of iterations with CHDG.
\end{abstract}


\section{Introduction}

Discontinuous Galerkin (DG) finite element methods have proven their strength
to address realistic time-harmonic wave propagation problems,
see e.g.~\cite{li2014hybridizable,barucq2021implementation,barucq2023construction,faucher2020adjoint}.
Due to their ability to handle unstructured and possibly non-conforming meshes,
they are very versatile and can provide high-fidelity solutions to problems
with complicated physical and geometrical configurations.
The DG framework also allows for high-order polynomial basis functions,
which limits dispersion errors occurring when considering high-frequency problems
\cite{ainsworth2004discrete,ainsworth_monk_muniz_2006a,melenk_sauter_2011a}.
Besides, since the degrees of freedom (DOFs) of DG methods are only attached to cells,
they can be linearly indexed in memory, which enables efficient implementation
on vectorized computer architectures, including GPUs,
see e.g.~\cite{klockner2009nodal,modave2016gpu,karakus2019gpu}.

Despite their manifest advantages, the main bottleneck of DG methods
(and more generally, of finite element and finite difference methods)
is the numerical solution of the resulting linear system.
Indeed, although the matrix is sparse, it is typically large, ill-conditioned, and indefinite,
see e.g.~\cite{ernst2012difficult}.
Standard algebraic solvers perform poorly for these systems:
direct solvers are prohibitively costly in large 3D applications;
iterative solvers require less memory storage and allow direct parallel implementations,
but the convergence of the iterative processes can be slow because of intrinsic properties of
the time-harmonic wave propagation problems.
Although preconditioning strategies
have been proposed to speed up the convergence of iterative procedures
and to reduce the computational cost,
see e.g.~\cite{erlangga2004class,farhat2009domain,huber2014hybrid,gander2019class,taus2020sweeps,bootland2021comparison,
gander2022schwarz,royer2022non,dai2022multidirectional},
the development of fast iterative finite element solvers
for high-frequency wave propagation problems remains an active research area.

In this work, we focus on a DG scheme for the Helmholtz equation
in first-order form with upwind fluxes, see e.g.~\cite{hesthaven2007nodal,karniadakis2005spectral}.
Although this approach is very popular in the time-domain,
its direct use for time-harmonic problems is limited,
since it involves many coupled DOFs.
In order to reduce the computational cost, hybridization strategies have been introduced
in the seminal work \cite{cockburn2009unified}, and largely studied over the past decade,
see e.g.~\cite{nguyen2011high,%
griesmaier2011error,%
chen2013hybridizable,%
huerta2013efficiency,%
giorgiani2013hybridizable,%
gopalakrishnan2015stabilization,%
li2013numerical}.
In the resulting hybridizable discontinuous Galerkin (HDG) methods, an additional
``hybrid variable'' corresponding to the Dirichlet trace of the solution is introduced.
This additional variable acts as a Lagrange multiplier that decouples the physical unknowns.
After inverting element-wise local matrices, a reduced system involving
only the Lagrange multiplier is formulated over the skeleton of the mesh.
When using a direct linear solver, the advantage of this approach is straightforward,
as the reduced HDG system features far less DOFs than the original DG system
while preserving its sparsity pattern.
On the other hand, the situation is not as clear when considering iterative solvers,
since the size and filling of the matrix are no longer the main performance criteria.

Here, we propose a novel hybridization strategy in order to accelerate
the solution of the large-scale linear system arising from the upwind DG discretization
of time-harmonic problems with iterative procedures.
This strategy, which we call the CHDG method,
uses the characteristic variables defined at the interface between the elements
as the hybrid variables, as opposed to the Dirichlet traces in the standard HDG
method. This alternative choice of hybrid variable leads to favorable
properties for the resulting reduced system and to more efficient iterative solution procedures
in comparison with the standard hybridization strategy. Specifically,
the reduced system can be written in the form
\begin{equation}
  \label{eqn:fixedPoint}
  (\TI-\Pi\TS)g=b,
\end{equation}
where $g$ corresponds to the characteristic variables, $\Pi$ is an exchange operator
swapping the variables at the interfaces, and $\TS$ is a scattering operator related to
the solution of local element-wise problems. The iteration operator $\Pi\TS$
is a strict contraction, so that the system is well-posed and can be solved with a
simple fixed-point iteration.

Interestingly, the form of the reduced system~\eqref{eqn:fixedPoint} closely resembles
the ultra-weak variational formulation (UWVF) employed in Trefftz discretizations
of time-harmonic problem~\cite{cessenat1998application}.
In fact, our reduced system inherits many of the favorable properties of UWVF matrices.
The advantage of our approach though, is that it simply relies on polynomial basis functions
instead of local solutions.
As a result, volume right-hand sides and heterogeneous media can be readily considered
\cite{hesthaven2007nodal}. Besides, the mesh can be refined, and the discretization order
increased without the conditioning issues typically appearing for plane wave basis functions,
see e.g.~\cite{huttunen2002computational,gabard2007discontinuous,barucq2021local,pernet2022ultra,parolin2022stable}.
To avoid these issues, quasi-Trefftz methods with polynomial basis functions are currently investigated, see e.g.~\cite{imbert2023three}.
The UWVF has been tested with polynomial basis functions in \cite{fure2020discontinuous,monk2010hybridizing}.

The fixed-point system~\eqref{eqn:fixedPoint} also naturally appears
in non-overlapping substructuring domain decomposition (DD) methods.
The iteration operator $\Pi\TS$ was already used
in the seminal work of Despr\'es~\cite{despres1991methodes}.
This formalism and the analogy with a fixed-point system have been widely used,
e.g.~in~\cite{nataf1994optimal,collino2000domain,gander2002optimized,
royer2022non,collino2020exponentially,boubendir2012quasi,modave2020non}.
Our CHDG method can in fact be seen as an element-wise DD method.
The key novelty of our approach, however, is that our
discrete transmission conditions
are built from the numerical fluxes naturally arising in the DG setting.
In particular, cross-points where several mesh faces meet are naturally handled
without any specific treatment.
In contrast, standard DD algorithms based on conforming finite elements require specific
(and sometimes non-local) swap operators to properly account for such cross-points
\cite{claeys2021non,claeys2022robust,pechstein2023unified}.

In this work, the CHDG method with auxiliary characteristic variables is introduced and studied for the numerical solution of Helmholtz problems.
We rigorously show that the resulting reduced system set on the skeleton of the mesh is
well-posed and algebraically equivalent to the original upwind DG method.
Moreover, we prove that this reduced system corresponds to a fixed-point problem
with a strict contraction, which can therefore always be solved with the Richardson iteration.
Then, the performance of CHDG is compared
to the original DG scheme and its standard HDG reformulation
with a sequence of numerical benchmarks.
These examples show that the standard
Richardson iteration always converges without relaxation (although sometimes slowly)
for the CHDG approach, whereas this approach fails to converge for DG and HDG.
Finally, the convergence of standard Krylov methods is compared for the three approaches.
We find that CHDG always requires fewer iterations than DG and HDG
to reach a given accuracy with the GMRES and CGNR iterations.

The remainder of this work is structured as follows.
In Section~\ref{sec:methods}, we introduce the notations, and describe the upwind DG,
the standard HDG, and the CHDG methods as well as their basic properties.
In Section~\ref{sec:redSys}, the reduced system obtained with CHDG is analyzed in detail.
We describe our numerical benchmarks in Section~\ref{sec:linSys}, where we also comment
on the required memory space and conditioning properties of the different approaches.
We study the convergence of standard iterative schemes in Section~\ref{sec:iterProc}
and present our concluding remarks in Section~\ref{sec:conclu}.


\section{Hybridizable discontinuous Galerkin methods}
\label{sec:methods}

Let $\Omega\subset\mathbb{R}^d$, with $d=2$ or $3$, be a Lipschitz polytopal domain.
The boundary $\partial \Omega$ of the domain is partitioned into three non-overlapping
polytopal Lipschitz subsets $\Gamma_{\TD}$, $\Gamma_{\TN}$ and $\Gamma_{\TR}$.
We consider the following time-harmonic scalar wave propagation
problem:
\begin{align}
  \left\{
    \begin{aligned}
      -\im \kappa u + \bnabla\cdot \bq &= 0, && \text{in $\Omega$}, \\
      -\im \kappa \bq + \bnabla u &= \bzero, && \text{in $\Omega$}, \\
      u &= s_{\TD}, && \text{on $\Gamma_\TD$}, \\
      \bn\cdot\bq &= s_{\TN}, && \text{on $\Gamma_\TN$}, \\
      u - \bn\cdot\bq &= s_{\TR}, && \text{on $\Gamma_\TR$},
    \end{aligned}
  \right.
  \label{eqn:pbm}
\end{align}
where the unknowns $u: \Omega \to \mathbb C$ and $\bq: \Omega \to \mathbb C^d$ represent a
time-harmonic wave, $\kappa > 0$ is a given real constant called the wavenumber, and $\bn$
stands for the unit outward normal to $\Omega$. The functions
$s_{\TD}: \Gamma_{\TD} \to \mathbb C$,
$s_{\TN}: \Gamma_{\TN} \to \mathbb C$ and
$s_{\TR}: \Gamma_{\TR} \to \mathbb C$
are boundary data representing an incident field.
Specifically, \eqref{eqn:pbm} is a particular case of the acoustic wave equation,
where we have assumed a time dependence $e^{-\im\omega t}$ for the data and the
solution and $\kappa:=\omega/c$, where $\omega$ is the angular frequency, $t$ is the
time and $c$ is the (uniform) wave speed.
For the sake of brevity, we do not consider volume right-hand sides in the
two first equations of \eqref{eqn:pbm}, but these could be included without difficulty.


\subsection{Mesh, approximation spaces and inner products}
\label{sec:form:notations}

We consider a conforming mesh $\CT_h$ of the domain $\Omega$ consisting of simplicial
elements $K$.
The collection of element boundaries is denoted by
$\partial\CT_h := \{\partial K \:|\: K\in\CT_h\}$,
and the collection of faces is denoted by $\CF_h$.
The collection of faces of an element $K$ is denoted by $\CF_K$.

The approximate fields produced by DG schemes are piecewise polynomials.
Here, for the sake of simplicity, we fix a polynomial degree $p \geq 0$ and introduce
\begin{align*}
  V_h := \prod_{K\in\CT_h} \CP_p(K)
  \qquad \text{and} \qquad
  \BV_h := \prod_{K\in\CT_h} \BCP_p(K),
\end{align*}
where $\CP_p(\cdot)$ and $\BCP_p(\cdot)$ denote spaces of scalar and vector complex-valued
polynomials of degree smaller or equal to $p$. By convention, the restrictions of $u_h\in V_h$
and $\bu_h\in \BV_h$ on $K$ are denoted $u_K$ and $\bu_K$, respectively.

We introduce the sesquilinear forms
\begin{align*}
  \prodV{u}{v}_{K} &:= \int_K u\overline{v}\:d\bx, &
  \prodV{\bu}{\bv}_{K} &:= \int_K \bu\cdot\overline{\bv}\:d\bx, &
  \prodS{u}{v}_{\partial K} &:= \sum_{F\in\CF_K}\int_F u\overline{v}\:d\sigma(\bx), \\
  \prodV{u}{v}_{\CT_h} &:= \sum_{K\in\CT_h} \prodV{u}{v}_{K}, &
  \prodV{\bu}{\bv}_{\CT_h} &:= \sum_{K\in\CT_h} \prodV{\bu}{\bv}_{K}, &
  \prodS{u}{v}_{\partial\CT_h} &:= \sum_{K\in\CT_h} \prodS{u}{v}_{\partial K}.
\end{align*}
By convention, the quantities used in the surface integral $\prodS{\cdot}{\cdot}_{\partial K}$
correspond to the restriction of fields defined on $K$ (e.g.~$v_K$ and $\bv_K$) or quantities
associated to the faces of $K$ (e.g.~$\bn_{K,F}$ with $F\in\CF_K$).


\subsection{Standard DG formulation and numerical fluxes}
\label{sec:form:stdDG}

The general DG formulation of system~\eqref{eqn:pbm} reads:
\begin{problem}
\label{pbm:standardDG}
Find $(u_h,\bq_h) \in V_h\times\BV_h$ such that, for all $(v_h,\bp_h) \in V_h\times\BV_h$,
\begin{align*}
\left\{
\begin{aligned}
  -\im \kappa \prodV{u_h}{v_h}_{\CT_h}
    - \prodV{\bq_h}{\bnabla v_h}_{\CT_h}
    + \prodS{\bn\cdot\widehat{\bq}(u_h,\bq_h)}{v_h}_{\partial\CT_h}
    &= 0, \\
  -\im \kappa \prodV{\bq_h}{\bp_h}_{\CT_h}
    - \prodV{u_h}{\bnabla\cdot \bp_h}_{\CT_h}
    + \prodS{\widehat{u}(u_h,\bq_h)}{\bn\cdot\bp_h}_{\partial\CT_h}
    &= 0,
\end{aligned}
\right.
\end{align*}
where the \emph{numerical fluxes} $\widehat{u}(u_h,\bq_h)$ and $\bn\cdot\widehat{\bq}(u_h,\bq_h)$ are defined face by face below.
\end{problem}

The properties of DG formulations intrinsically depend on the choice of the numerical fluxes.
In this work, we consider \emph{upwind fluxes}. For an interior face $F\not\subset\partial\Omega$
of an element $K$, these fluxes can be written as
\begin{subequations}
\begin{align}
  &\left\{
  \begin{aligned}
    \widehat{u}_F &:= \frac{u_K + u_{K'}}{2} + \bn_{K,F}\cdot\left(\frac{\bq_K-\bq_{K'}}{2}\right), \\
    \bn_{K,F}\cdot\widehat{\bq}_F &:= \bn_{K,F}\cdot\left(\frac{\bq_K+\bq_{K'}}{2}\right) + \frac{u_K - u_{K'}}{2},
  \end{aligned}
  \right.
  \label{eqn:DGflux:int}
\end{align}
where $K'$ is the neighboring element and $\bn_{K,F}$ is the unit outward normal to $K$ on $F$.
For a boundary face $F\subset\partial\Omega$ of an element $K$, the fluxes are defined as
\begin{align}
  &\left\{
  \begin{aligned}
    \widehat{u}_F &:= s_{\TD}, \\
    \bn_{K,F}\cdot\widehat{\bq}_F &:= \bn_{K,F}\cdot\bq_K + (u_K - s_{\TD}),
  \end{aligned}
  \right.
  &&\text{if $F\subset\Gamma_\TD$,}
  \label{eqn:DGflux:bndD} \\
  &\left\{
  \begin{aligned}
    \widehat{u}_F &:= u_K + (\bn_{K,F}\cdot\bq_K - s_{\TN}), \\
    \bn_{K,F}\cdot\widehat{\bq}_F &:= s_{\TN},
  \end{aligned}
  \right.
  &&\text{if $F\subset\Gamma_\TN$,}
  \label{eqn:DGflux:bndN} \\
  &\left\{
  \begin{aligned}
    \widehat{u}_F &:= (u_K + \bn_{K,F}\cdot\bq_K + s_{\TR})/2, \\
    \bn_{K,F}\cdot\widehat{\bq}_F &:= (u_K + \bn_{K,F}\cdot\bq_K - s_{\TR})/2,
  \end{aligned}
  \right.
  &&\text{if $F\subset\Gamma_\TR$.}
  \label{eqn:DGflux:bndR}
\end{align}
\end{subequations}

The upwind fluxes are consistent, which means that $\widehat{u}(u,\bq)=u$ and
$\bn\cdot\widehat{\bq}(u,\bq)=\bn\cdot\bq$ on both interior and boundary faces
when $u$ and $\bq$ are the solution of Problem~\eqref{eqn:pbm}. Under standard
assumptions, the method achieves the optimal convergence rate for the numerical
fields $u_h$ and $\bq_h$ in $L^2$-norm, i.e.~$p+1$ where $p$ is the polynomial degree of
the basis functions. Error estimates have been derived for HDG formulations, equivalent to the
DG formulation above, for the Helmholtz problem with a Dirichlet boundary condition
in~\cite{griesmaier2011error} and a Robin boundary condition
in~\cite{feng2013absolutely,cui2014analysis}. By using a post-processing, the
convergence rate for $u_h$ can be increased by one, see e.g.~\cite{cockburn2008superconvergent}.


\subsection{Hybridization with numerical trace --- Standard HDG method}
\label{sec:form:stdHDG}

In standard HDG formulations, an additional variable $\widehat{u}_h$ corresponding
to the numerical flux $\widehat{u}$ is introduced at the interface between the elements
and on the boundary faces. The discrete unknowns associated to the fields $u_h$ and $\bq_h$
are eliminated in the solution procedure, leading to a reduced system with discrete unknowns
associated to $\widehat{u}_h$ on the skeleton, see e.g.~\cite{cockburn2009unified, cockburn2016static}.

The additional variable, which is called the \emph{numerical trace} in the HDG literature,
belongs to the space $\widehat{V}_h$ defined as
\begin{align*}
  \widehat{V}_h &:= \prod_{F\in\CF_h} \CP_p(F).
\end{align*}
For any field $\widehat{u}_h\in\widehat{V}_h$, there is one set of scalar unknowns associated
to each face of the mesh. After observing that
\begin{align*}
\bn \cdot \widehat{\bq}(u_h,\bq_h)
=
u_h + \bn \cdot \bq_h - \widehat{u}_h,
\end{align*}
we obtain the following HDG formulation, where the numerical trace appears as a hybrid variable:
\begin{problem}
\label{pbm:standardHDG}
Find $(u_h,\bq_h,\widehat{u}_h) \in V_h \times \BV_h \times \widehat{V}_h$ such that, for all $(v_h,\bp_h,\widehat{v}_h) \in V_h \times \BV_h \times \widehat{V}_h$,
\begin{align*}
\left \{
\begin{aligned}
  -\im \kappa \prodV{u_h}{v_h}_{\CT_h}
    - \prodV{\bq_h}{\bnabla v_h}_{\CT_h}
    + \prodS{u_h + \bn\cdot\bq_h -  \widehat{u}_h}{v_h}_{\partial\CT_h}
    &= 0, \\
  -\im \kappa \prodV{\bq_h}{\bp_h}_{\CT_h}
    - \prodV{u_h}{\bnabla\cdot \bp_h}_{\CT_h}
    + \prodS{\widehat{u}_h}{\bn\cdot\bp_h}_{\partial\CT_h}
    &= 0
\end{aligned}
\right.
\end{align*}
and
\begin{multline*}
  \prodS{\widehat{u}_h}{\widehat{v}_h}_{\CF_h}
  - \prodS{\frac{1}{2}(u_h + \bn\cdot\bq_h)}{\widehat{v}_h}_{\partial\CT_h\backslash\partial\Omega}
  - \prodS{u_h + \bn\cdot\bq_h}{\widehat{v}_h}_{\Gamma_\TN}
  - \prodS{\frac{1}{2} (u_h + \bn\cdot\bq_h)}{\widehat{v}_h}_{\Gamma_\TR} \\
  = \prodS{s_{\TD}}{\widehat{v}_h}_{\Gamma_\TD}
  - \prodS{s_{\TN}}{\widehat{v}_h}_{\Gamma_\TN}
  + \prodS{\frac{1}{2} s_{\TR}}{\widehat{v}_h}_{\Gamma_\TR}.
\end{multline*}
\end{problem}

This formulation is equivalent to the standard DG formulation (Problem~\ref{pbm:standardDG})
in the sense that the discrete solutions $u_h$ and $\bq_h$ are identical,
see e.g.~\cite{li2013numerical}.

In the HDG literature \cite{cockburn2009unified,griesmaier2011error,li2013numerical},
a generalization of the above formulation is often considered with
\begin{align*}
\bn\cdot\widehat{\bq}(u_h,\bq_h) = \bn\cdot\bq_h + \tau (u_h - \widehat{u}_h),
\end{align*}
where $\tau$ is the so-called \emph{stabilization function}.
In this work, we focus on the case where $\tau=1$, which corresponds
to the standard upwind fluxes and is widely used in practice.

\begin{remark}[Source projection]
The numerical trace $\widehat{u}_h$ is a polynomial function on every face,
whereas the numerical flux $\widehat{u}$ introduced in the previous section
may be a more general function at any boundary face where the boundary data
does not belong to $\CP_p(F)$. Nevertheless, in practice,
equations~\eqref{eqn:DGflux:bndD}-\eqref{eqn:DGflux:bndR} are still valid
for $\widehat{u}_h$ if the boundary data are projected into the polynomial spaces.
\end{remark}


\subsubsection*{Local element-wise discrete problems}

In the solution procedure, the fields $u_h$ and $\bq_h$ are eliminated by solving local element-wise problems, where the numerical trace $\widehat{u}_h$ is considered as a given data.

For each element $K$, the local problem reads:
\begin{problem}
\label{pbm:HDG:local}
Find $(u_K,\bq_K) \in \CP_p(K)\times\BCP_p(K)$ such that, for all $(v_K,\bp_K) \in \CP_p(K)\times\BCP_p(K)$,
\begin{align*}
\left\{\begin{aligned}
    - \im \kappa \prodV{u_K}{v_K}_{K}
    - \prodV{\bq_K}{\bnabla v_K}_{K}
    + \sum_{F\in\mathcal{F}_K} \prodS{u_K + \bn_{K,F}\cdot\bq_K}{v_K}_{F}
  &=
    \sum_{F\in\mathcal{F}_K}
      \prodS{\widehat{u}_F}{v_K}_{F},
  \\
    - \im \kappa \prodV{\bq_K}{\bp_K}_{K}
    - \prodV{u_K}{\bnabla\cdot \bp_K}_{K}
  &=
    - \sum_{F\in\mathcal{F}_K} \prodS{\widehat{u}_F}{\bn_{K,F}\cdot\bp_K}_{F}, \\
\end{aligned}\right.
\end{align*}
for given surface data {$\widehat{u}_F\in\CP_p(F)$ for all $F\in\CF_K$}.
\end{problem}

This local discrete problem is similar to a Helmholtz problem defined on $K$ with a non-homogeneous Dirichlet boundary condition on $\partial K$.
The discrete problem is well-posed without any condition, as shown e.g.~in \cite{gopalakrishnan2015stabilization}.
We include the proof here for the sake of completeness.

\begin{theorem}[Well-posedness of the local discrete problem]
\label{thm:HDG:local}
Problem~\ref{pbm:HDG:local} is well-posed.
\end{theorem}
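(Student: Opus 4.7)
The plan is to reduce to a uniqueness statement. Since Problem~\ref{pbm:HDG:local} is a square finite-dimensional linear system, it suffices to show that the homogeneous version (i.e.~with $s=0$) admits only the trivial solution $(u_K,\bq_K) = (0,\bzero)$.

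The first step is an energy identity. I would test the first equation with $v_K = u_K$ and the second with $\bp_K = \bq_K$. After integrating by parts the term $\prodV{u_K}{\div\bq_K}_K$ in the second equation and taking its complex conjugate, summing with the first equation cancels the cross terms $\prodV{\bq_K}{\grad u_K}_K$ and $\prodS{\bn\cdot\bq_K}{u_K}_{\partial K}$. What remains is
\begin{equation*}
\im\kappa\bigl(\|\bq_K\|_K^2 - \|u_K\|_K^2\bigr) + \|u_K\|_{\partial K}^2 = 0,
\end{equation*}
so that taking the real part yields $u_K|_{\partial K} = 0$.

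Next, I would use this boundary vanishing to recover a strong form of the equations. With $u_K = 0$ on $\partial K$ the boundary integrals disappear after integration by parts, and testing against arbitrary $\bp_K \in \BCP_p(K)$ in the second equation gives $\prodV{\grad u_K - \im\kappa\bq_K}{\bp_K}_K = 0$, whence $\grad u_K = \im\kappa\bq_K$ on $K$. A parallel argument in the first equation yields $\div\bq_K = \im\kappa u_K$. Combining these two relations produces the polynomial identity $\Delta u_K + \kappa^2 u_K = 0$ on $K$.

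The conclusion is then purely algebraic: since $\Delta$ lowers polynomial degree by two while multiplication by $\kappa^2 \neq 0$ preserves it, the degree-$p$ and degree-$(p-1)$ homogeneous components of $u_K$ vanish immediately, and a downward induction on the degree forces every homogeneous component of $u_K$ to be zero. Hence $u_K \equiv 0$, and $\bq_K = \bzero$ then follows from $\grad u_K = \im\kappa\bq_K$. The main technical subtlety lies in the first step: one must conjugate the correct equation and respect the sesquilinear structure so that all mixed terms align and cancel, leaving a single sign-definite boundary quantity; once this identification is made, the remaining arguments are routine.
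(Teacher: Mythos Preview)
Your proposal is correct and follows essentially the same route as the paper: derive an energy identity forcing $u_K|_{\partial K}=0$, then recover the strong Helmholtz system and conclude that no nonzero polynomial can satisfy it. Your derivation is in fact slightly more economical (you conjugate only the second tested equation and sum two identities, whereas the paper integrates by parts and conjugates both equations and sums four), and you spell out the degree-descent argument that the paper leaves as the one-line assertion ``there is no non-trivial polynomial solution''; neither change alters the substance of the argument.
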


\begin{proof}
We simply have to prove that, if $\widehat{u}_F=0$ for all $F\in\CF_K$, the unique solution of Problem~\ref{pbm:HDG:local} is $u_K=0$ and $\bq_K=\bzero$.
For the sake of brevity, the subscripts $K$ and $F$ are omitted for the local fields, the test functions, the unit outgoing normal and the surface data.
Taking both equations of Problem~\ref{pbm:HDG:local} with $v=u$ and $\bp=\bq$ gives
\begin{align*}
  \textstyle
  -\im \kappa \prodV{u}{u}_{K}
    - \prodV{\bq}{\bnabla u}_{K}
    + \prodS{u + \bn\cdot\bq}{u}_{\partial K}
    &= 0, \\
  \textstyle
  -\im \kappa \prodV{\bq}{\bq}_{K}
    - \prodV{u}{\bnabla\cdot\bq}_{K}
    &\textstyle
    = 0.
\end{align*}
Integrating by parts in both equations and taking the complex conjugate lead to
\begin{align*}
  \textstyle
  \im \kappa \prodV{u}{u}_{K}
    + \prodV{u}{\bnabla\cdot\bq}_{K}
    + \prodS{u}{u}_{\partial K}
    &= 0, \\
  \textstyle
  \im \kappa \prodV{\bq}{\bq}_{K}
    + \prodV{\bq}{\bnabla u}_{K}
    - \prodS{\bn\cdot\bq}{u}_{\partial K}
    &\textstyle
    = 0.
\end{align*}
Adding the four previous equations yields $\prodS{u}{u}_{\partial K} = 0$,
and then $u=0$ on $\partial K$. By using this result in Problem~\ref{pbm:HDG:local}, one has
\begin{align*}
\left\{\begin{aligned}
  -\im \kappa \prodV{u}{v}_{K} + \prodV{\bnabla\cdot \bq}{v}_{K} &= 0, \\
  -\im \kappa \prodV{\bq}{\bp}_{K} + \prodV{\bnabla u}{\bp}_{K} &= 0,
\end{aligned}\right.
\end{align*}
for all $[v,\bp] \in \CP_p(K)\times\BCP_p(K)$.
We conclude that
\begin{align*}
  -i\kappa u + \bnabla\cdot \bq &= 0, \\
  -i\kappa \bq + \bnabla u &= \bzero,
\end{align*}
in a strong sense. Because there is no non-trivial polynomial solution
to the previous equations, this yields the result.
\end{proof}

\begin{remark}[Conditioning]
At the continuous level, Helmholtz problems with Dirichlet boundary conditions are ill-posed if the frequency corresponds to an eigenvalue of the Laplace operator.
Here, the Dirichlet conditions are weakly imposed through penalization, so that the discrete
problem are always well-posed. Nevertheless, we shall see in Section \ref{sec:num:condLoc}
that the matrices of the local systems becomes ill-conditioned as $kh$ goes to zero.
\end{remark}


\subsection{Hybridization with characteristic variables --- CHDG method}
\label{sec:form:CHDG}

We propose a new hybridization procedure where the additional variable is associated to incoming and outgoing fluxes at every face of the mesh.
More precisely, the additional variable corresponds to the incoming characteristic variable relative to each element.
Similarly to the standard HDG method, the discrete unknowns associated to the fields $u_h$ and $\bq_h$
are eliminated in the solution procedure, leading to a reduced system with discrete unknowns
associated to the incoming characteristic variable on the skeleton.


\subsubsection*{Characteristic variables}

At each interior face $F\not\subset\partial\Omega$ of an element $K$, the \emph{outgoing characteristic variable $g_{K,F}^{\oplus}$} and the \emph{incoming characteristic variable $g_{K,F}^{\ominus}$} are defined as
\begin{align}
  g_{K,F}^{\oplus} &:= u_K + \bn_{K,F}\cdot\bq_K, \label{eqn:defOutCharac} \\
  g_{K,F}^{\ominus} &:= u_{K'} - \bn_{K,F}\cdot\bq_{K'}, \nonumber
\end{align}
respectively, where $K'$ is the neighboring element.
Let us highlight that the outgoing characteristic variable depends only on values
corresponding to element $K$, whereas the incoming one depends only on values
corresponding to the neighboring element $K'$.
The outgoing characteristic variable of one side corresponds to the incoming one of
the other side, i.e.~$g_{K,F}^{\oplus} = g_{K',F}^{\ominus}$ and
$g_{K,F}^{\ominus} = g_{K',F}^{\oplus}$.
The notations are illustrated on Figure~\ref{fig:notation}.

\begin{figure}[tb!]
  \centering
  \includegraphics[width=0.30\textwidth]{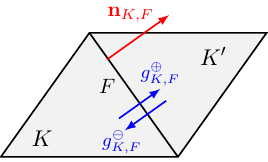}
\caption{Notations for the outgoing and incoming characteristic variables (resp. $g_{K,F}^{\oplus}$ and $g_{K,F}^{\ominus}$) at the face $F$ shared by an element $K$ and a neighboring element $K'$.
Let us note that $g_{K,F}^{\oplus}=g_{K',F}^{\ominus}$ and $g_{K,F}^{\ominus}=g_{K',F}^{\oplus}$.}
\label{fig:notation}
\end{figure}

The characteristic variables can be interpreted as information transported towards
the exterior and the interior of $K$, respectively.
Indeed, let us consider the time-domain version of the governing equations.
Assuming there is no source and the fields are varying only in direction $\bn$,
we get
\begin{align*}
  \left\{\:
    \begin{aligned}
      \partial_t u + c \: \partial_n(\bn\cdot\bq) &= 0, \\
      \partial_t (\bn\cdot\bq) + c \: \partial_n u &= 0.
    \end{aligned}
  \right.
\end{align*}
A simple linear combination gives the transport equations
\begin{align*}
  \left\{\:
    \begin{aligned}
      \partial_t (u + \bn\cdot\bq) + c \: \partial_n (u + \bn\cdot\bq) &= 0, \\
      \partial_t (u-\bn\cdot\bq) - c \: \partial_n (u - \bn\cdot\bq) &= 0.
    \end{aligned}
  \right.
\end{align*}
Therefore, $g^{\oplus}=u+\bn\cdot\bq$ and $g^{\ominus}=u-\bn\cdot\bq$ correspond to quantities transported in the domain in directions $+\bn$ (downstream) and $-\bn$ (upstream), respectively, at velocity $c$.
In the CFD community, the variables $g^{\oplus}$ and $g^{\ominus}$ are generally called \emph{characteristic variables} (see e.g.~\cite{toro2013riemann}), and they are used to define \emph{upwind fluxes} for solving time-dependent problems.
For more general problems, characteristic variables and upwind fluxes are obtained by solving local Riemann problems along the normal direction, see e.g.~\cite{hesthaven2007nodal,toro2013riemann}.

The numerical fluxes \eqref{eqn:DGflux:int} can be rewritten with the characteristic variables as
\begin{align*}
  \left\{
  \begin{aligned}
    \widehat{u}_F &= (g_{K,F}^{\oplus} + g_{K,F}^{\ominus})/2, \\
    \bn_{K,F}\cdot\widehat{\bq}_F &= (g_{K,F}^{\oplus} - g_{K,F}^{\ominus})/2.
  \end{aligned}
  \right.
\end{align*}
If $F$ is a boundary face, i.e.~$F\subset\partial\Omega$, the numerical fluxes and the
outgoing characteristic variable can be defined with~\eqref{eqn:defOutCharac},
but the incoming characteristic variable must be defined differently because there is no
neighboring element. It is defined as
\begin{subequations}
\begin{align}
  g_{K,F}^{\ominus} &:= 2 s_{\TD} - g_{K,F}^{\oplus},
  && \text{if $F\subset\Gamma_\TD$}, \label{eqn:DGchar:bndD} \\
  g_{K,F}^{\ominus} &:= g_{K,F}^{\oplus} - 2s_{\TN},
  && \text{if $F\subset\Gamma_\TN$}, \label{eqn:DGchar:bndN} \\
  g_{K,F}^{\ominus} &:= s_{\TR},
  && \text{if $F\subset\Gamma_\TR$}.\label{eqn:DGchar:bndR}
\end{align}
\end{subequations}
By using these definitions, the numerical fluxes corresponding to the boundary conditions,
i.e.~equations \eqref{eqn:DGflux:bndD}-\eqref{eqn:DGflux:bndR}, are recovered.
Therefore, the boundary conditions are prescribed directly in the definition of the incoming
characteristic variables.


\subsubsection*{CHDG formulation}

In the proposed method, the additional variable, denoted $g^{\ominus}_h$,
corresponds to the incoming characteristic variable at the boundary of all the elements.
The variable $g^{\ominus}_h$ belongs to the space $G_h$ defined as
\begin{align*}
  G_h := \prod_{K\in\CT_h} \prod_{F\in\CF_K} \CP_p(F).
\end{align*}
For any $g^{\ominus}_h\in G_h$, there are two sets of unknowns at each interior
face of the mesh, which correspond to the incoming characteristic variable associated
to the neighboring elements. In the following, the method is called the CHDG method.
The first letter of the name refers to the ``c'' in ``characteristic variable''.

The CHDG formulation reads:
\begin{problem}
\label{pbm:CHDG}
Find $(u_h,\bq_h,g^{\ominus}_h) \in V_h\times\BV_h\times G_h$ such that, for all $(v_h,\bp_h,\xi_h) \in V_h\times\BV_h\times G_h$,
\begin{align*}
\left\{\begin{aligned}
    -\im \kappa \prodV{u_h}{v_h}_{\CT_h}
    - \prodV{\bq_h}{\bnabla v_h}_{\CT_h}
    + \prodS{\frac{1}{2}(g^{\oplus}(u_h,\bq_h)-g^{\ominus}_h)}{v_h}_{\partial\CT_h}
    &= 0,
  \\
    -\im \kappa \prodV{\bq_h}{\bp_h}_{\CT_h}
    - \prodV{u_h}{\bnabla\cdot \bp_h}_{\CT_h}
    + \prodS{\frac{1}{2}(g^{\oplus}(u_h,\bq_h)+g^{\ominus}_h)}{\bn\cdot\bp_h}_{\partial\CT_h}
    &= 0,
\end{aligned}\right.
\end{align*}
and
\begin{align}
  \prodS{g^{\ominus}_h - \Pi(g^{\oplus}(u_h,\bq_h))}{\xi_h}_{\partial\CT_h}
  = \prodS{b}{\xi_h}_{\partial\CT_h},
  \label{eqn:CHDG:addRelation}
\end{align}
with $g^{\oplus}(u_h,\bq_h) := u_h + \bn\cdot\bq_h$.
\end{problem}
The operator $\Pi : G_h \longrightarrow G_h$ used in equation~\eqref{eqn:CHDG:addRelation}
is the \emph{global exchange operator}. It is the key mechanism to enforce the weak
coupling of the element-wise problems at the interior faces and to enforce the boundary
conditions at the boundary faces.
At interior faces, it simply swaps the outgoing characteristics of the two neighboring
elements. This definition is suitably modified
at boundary faces to account for boundary conditions.
For each face $F$ of each
element $K$, $\Pi$ is defined as
\begin{align}
\label{eq_definition_Pi}
\Pi(g^{\oplus})|_{K,F}
=
\left\{
\begin{aligned}
&g^{\oplus}_{K',F} && \text{if $F\not\subset\partial\Omega$ is shared by $K$ and $K'$}, \\
&-g^{\oplus}_{K,F} && \text{if $F\subset\Gamma_\TD$}, \\
&g^{\oplus}_{K,F} && \text{if $F\subset\Gamma_\TN$}, \\
&0 && \text{if $F\subset\Gamma_\TR$},
\end{aligned}
\right.
\end{align}
for any $g^\oplus \in G_h$. For each face $F$ of each element $K$,
the \emph{global right-hand side} $b$ is given by
\begin{align*}
  b|_{K,F}
  &=
  \left\{
  \begin{aligned}
    &0 && \text{if $F\not\subset\partial\Omega$}, \\
    &2s_{\TD} && \text{if $F\subset\Gamma_\TD$}, \\
    &-2s_{\TN} && \text{if $F\subset\Gamma_\TN$}, \\
    &s_{\TR} && \text{if $F\subset\Gamma_\TR$}.
  \end{aligned}
  \right.
\end{align*}
Therefore, Equation \eqref{eqn:CHDG:addRelation} is equivalent to the following relations:
\begin{align*}
    \prodS{g^{\ominus}_{K,F}}{\xi_{K,F}}_{F}
  - \prodS{u_{K'} + \bn_{K',F}\cdot\bq_{K'}}{\xi_{K,F}}_{F}
  &= 0,
  && \text{if $F\not\subset\partial\Omega$,}
  \\
    \prodS{g^{\ominus}_{K,F}}{\xi_{K,F}}_{F}
  + \prodS{u_{K} + \bn_{K,F}\cdot\bq_{K}}{\xi_{K,F}}_{F}
  &= \prodS{2s_{\TD}}{\xi_{K,F}}_{F},
  && \text{if $F\subset\Gamma_\TD$,}
  \\
    \prodS{g^{\ominus}_{K,F}}{\xi_{K,F}}_{F}
  - \prodS{u_{K} + \bn_{K,F}\cdot\bq_{K}}{\xi_{K,F}}_{F}
  &= -\prodS{2s_{\TN}}{\xi_{K,F}}_{F},
  && \text{if $F\subset\Gamma_\TN$,}
  \\
    \prodS{g^{\ominus}_{K,F}}{\xi_{K,F}}_{F}
  &= \prodS{s_{\TR}}{\xi_{K,F}}_{F},
  && \text{if $F\subset\Gamma_\TR$,}
\end{align*}
for each face $F$ of each element $K$. The first relation enforces that
the incoming characteristic variable of an element is the outgoing one of
the neighboring element, and vice versa, for each interior face.
The other relations enforce the boundary conditions.

The CHDG formulation is equivalent to the standard DG formulation (Problem~\ref{pbm:standardDG}),
and thus to the standard HDG formulation (Problem~\ref{pbm:standardHDG}).
Similar to the standard HDG formulation, the additional variable $g^{\ominus}_h$
is a polynomial function on each face,
whereas the incoming characteristic variable introduced previously could be a more general
function on the boundary of the domain. Nevertheless, equations~\eqref{eqn:DGchar:bndD}-\eqref{eqn:DGchar:bndR}
still hold up to projecting the right-hand sides onto piecewise polynomials.


\subsubsection*{Local element-wise discrete problems}

The hybridization procedure leads to a reduced system with discrete unknowns
associated to the incoming characteristic variable $g^{\ominus}_h$
on the skeleton. This elimination is achieved by solving local element-wise problems,
where the incoming characteristic variable is considered as a given data.

For each element $K$, the local problem reads:
\begin{problem}
\label{pbm:CHDG:local}
Find $(u_K,\bq_K) \in \CP_p(K)\times\BCP_p(K)$ such that, for all $(v_K,\bp_K) \in \CP_p(K)\times\BCP_p(K)$,
\begin{align*}
\left\{\begin{aligned}
  -\im \kappa \prodV{u_K}{v_K}_{K}
    - \prodV{\bq_K}{\bnabla v_K}_{K}
    + {\sum_{F\in\mathcal{F}_K}} \prodS{\frac{1}{2} {g_{K,F}^\oplus}}{v_K}_{F}
  &= {\sum_{F\in\mathcal{F}_K}} \prodS{\frac{1}{2} {g^\ominus_{K,F}}}{v_K}_{F}, \\
  -\im \kappa \prodV{\bq_K}{\bp_K}_{K}
    - \prodV{u_K}{\bnabla\cdot \bp_K}_{K}
    + {\sum_{F\in\mathcal{F}_K}} \prodS{\frac{1}{2} {g_{K,F}^\oplus}}{\bn_{K,F}\cdot\bp_K}_{F}
  &= - {\sum_{F\in\mathcal{F}_K}} \prodS{\frac{1}{2} {g^\ominus_{K,F}}}{\bn_{K,F}\cdot\bp_K}_{F},
\end{aligned}\right.
\end{align*}
with $g_{K,F}^\oplus=u_K + \bn_{K,F}\cdot\bq_K$, for given surface data {$g_{K,F}^\ominus\in\CP_p(F)$ for all $F\in\CF_K$}.
\end{problem}

The local problem can be interpreted as a discretized Helmholtz problem defined on $K$ with a non-homogeneous Robin boundary condition on $\partial K$.
We show hereafter that this discrete problem is well-posed.

\begin{theorem}[Well-posedness of the local discrete problem]
Problem~\ref{pbm:CHDG:local} is well-posed.
\end{theorem}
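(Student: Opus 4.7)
The strategy mirrors the proof of Theorem~\ref{thm:HDG:local}: since Problem~\ref{pbm:CHDG:local} is finite-dimensional and linear, well-posedness reduces to uniqueness, so I would assume $s=0$ and aim to show that the only solution is $u_K=0$, $\bq_K=\bzero$. The key difference with the Dirichlet-type local problem is that the boundary penalization now only weights the combination $u_K+\bn\cdot\bq_K$, so I expect to recover two boundary quantities rather than just the trace of $u_K$.

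First I would test the two equations with $v_K=u_K$ and $\bp_K=\bq_K$, and also write down their complex conjugates. Taking the real part of each of the resulting identities kills the purely imaginary $-\im\kappa\|u_K\|_K^2$ and $-\im\kappa\|\bq_K\|_K^2$ terms (since $\kappa>0$ is real), leaving only gradient/divergence volume terms and boundary terms. The pivotal identity is the integration-by-parts relation
\begin{equation*}
\Re\prodV{\bq_K}{\grad u_K}_K+\Re\prodV{u_K}{\div\bq_K}_K
=\Re\prodS{\bn\cdot\bq_K}{u_K}_{\partial K},
\end{equation*}
which lets me cancel the volume terms when I add the two equations. What survives is
\begin{equation*}
\tfrac{1}{2}\prodS{u_K}{u_K}_{\partial K}+\tfrac{1}{2}\prodS{\bn\cdot\bq_K}{\bn\cdot\bq_K}_{\partial K}=0,
\end{equation*}
because the cross-term $\Re\langle u_K,\bn\cdot\bq_K\rangle_{\partial K}$ that the Robin-type penalization produces exactly matches the one coming from integration by parts, with opposite sign. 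This immediately yields $u_K=0$ and $\bn\cdot\bq_K=0$ on $\partial K$.

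Once both $u_K$ and $\bn\cdot\bq_K$ vanish on $\partial K$, the surface integrals in Problem~\ref{pbm:CHDG:local} disappear, and the formulation reduces to the purely interior variational identities
\begin{equation*}
-\im\kappa\prodV{u_K}{v_K}_K-\prodV{\bq_K}{\grad v_K}_K=0,
\qquad
-\im\kappa\prodV{\bq_K}{\bp_K}_K-\prodV{u_K}{\div\bp_K}_K=0,
\end{equation*}
for all polynomial test functions. Integrating by parts back (the boundary contributions vanish thanks to $u_K=0$ and $\bn\cdot\bq_K=0$), and since the integrands are polynomials in $\CP_p(K)$ tested against all of $\CP_p(K)$, I obtain the strong equations $-\im\kappa u_K+\div\bq_K=0$ and $-\im\kappa\bq_K+\grad u_K=\bzero$ in $K$. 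The conclusion then follows as in Theorem~\ref{thm:HDG:local} from the absence of a non-trivial polynomial solution to this first-order system, which one sees most easily by eliminating $\bq_K$ to get $\Delta u_K+\kappa^2 u_K=0$ with $u_K$ polynomial and $\kappa>0$.

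The only subtle step is the cancellation leading to the boundary identity $\|u_K\|_{\partial K}^2+\|\bn\cdot\bq_K\|_{\partial K}^2=0$: I need to keep track of the factors $\tfrac12$ on each Robin-type penalization and of the sign coming from the integration-by-parts identity, otherwise the cross-terms $\Re\langle u_K,\bn\cdot\bq_K\rangle_{\partial K}$ would not cancel. Everything else is routine and parallels the HDG proof.
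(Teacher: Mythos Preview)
Your proposal is correct and follows essentially the same route as the paper: test with $(u_K,\bq_K)$, use integration by parts together with the complex-conjugate equations (equivalently, take real parts as you do) to obtain $\|u_K\|_{\partial K}^2+\|\bn\cdot\bq_K\|_{\partial K}^2=0$, and then conclude via the strong first-order system that admits no non-trivial polynomial solution. The only cosmetic difference is that the paper writes out the four equations and adds them rather than invoking ``real parts'' explicitly.
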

\begin{proof}
We simply have to prove that, if $g_{K,F}^\ominus=0$ for all $F\in\CF_K$, the unique solution
of Problem~\ref{pbm:CHDG:local} is $u_K=0$ and $\bq_K=\bzero$.
For the sake of brevity, the subscripts $K$ and $F$ are omitted for the local fields, the test functions, the unit outgoing normal and the surface data.
Taking both equations of Problem~\eqref{pbm:CHDG:local} with $v=u$ and $\bp=\bq$ gives
\begin{align*}
  -\im \kappa \prodV{u}{u}_{K}
    - \prodV{\bq}{\bnabla u}_{K}
    + \prodS{\frac{1}{2} (u + \bn\cdot\bq)}{u}_{\partial K}
  &= 0, \\
  -\im \kappa \prodV{\bq}{\bq}_{K}
    - \prodV{u}{\bnabla\cdot \bq}_{K}
    + \prodS{\frac{1}{2} (u + \bn\cdot\bq)}{\bn\cdot\bq}_{\partial K}
  &= 0.
\end{align*}
Integrating by parts in both equations and taking the complex conjugate lead to
\begin{align*}
  \im \kappa \prodV{u}{u}_{K}
    + \prodV{u}{\bnabla\cdot \bq}_{K}
    + \prodS{u}{\frac{1}{2} (u - \bn\cdot\bq)}_{\partial K}
  &= 0, \\
  \im \kappa \prodV{\bq}{\bq}_{K}
    + \prodV{\bq}{\bnabla u}_{K}
    - \prodS{\bn\cdot\bq}{\frac{1}{2} (u - \bn\cdot\bq)}_{\partial K}
  &= 0.
\end{align*}
Adding the four previous equations yields $\prodS{u}{u}_{\partial K} + \prodS{\bn\cdot\bq}{\bn\cdot\bq}_{\partial K} = 0$, which gives $u=0$ and $\bn\cdot\bq=0$ on $\partial K$.
By using these boundary conditions in
Problem~\eqref{pbm:CHDG:local},
we have that the fields should be a solution of the strong problem.
Because there is no solution with both homogeneous Neumann and Dirichlet boundary conditions, this yields the result.
\end{proof}

\begin{remark}[Conditioning]
In contrast to Helmholtz problems with Dirichlet boundary conditions,
the local problems with Robin boundary conditions are always well-posed at
the continuous level. We shall see in Section~\ref{sec:num:condLoc}
that the matrices of the local systems stays well-conditioned as $kh$ goes to zero for low-order
finite elements, and that the condition number is smaller than with HDG for high-order finite
elements.
\end{remark}


\section{Analysis of the reduced system for the CHDG method}
\label{sec:redSys}

In this section, we introduce and study the reduced version of the hybridized formulation with characteristic variables (Problem~\ref{pbm:CHDG}).
This version is obtained by solving the local element-wise problems (Problem~\ref{pbm:CHDG:local}) and then eliminating the physical variables $u_h$ and $\bq_h$ from the system.


\subsection{Formulation of the reduced system}

In order to write the problem in a reduced formulation, we introduce the
\emph{global scattering operator} $\TS : G_h \longrightarrow G_h$ defined such that,
for each face $F$ of each element $K$,
\begin{align}
  \left.\TS(g^{\ominus}_h)\right|_{K,F} &:= u_K(g^{\ominus}_h) + \bn_{K,F}\cdot\bq_K(g^{\ominus}_h),
  \label{eqn:defS}
\end{align}
where $(u_K,\bq_K)$ is the solution of Problem~\ref{pbm:CHDG:local} with the incoming characteristic
data $(g^{\ominus}_{K,F})_{F \in \CF_K}$ contained in $g^{\ominus}_h$ as a given surface data.
This operator can be interpreted as an \emph{``incoming characteristic variable to outgoing characteristic variable''} operator.

By using the operator $\TS$, Problem~\ref{pbm:CHDG} is rewritten as:
\begin{problem}
\label{pbm:CHDG:reducedTmp}
Find $g^{\ominus}_h \in G_h$ such that, for all $\xi_h \in G_h$,
\begin{align*}
    \prodS{g^{\ominus}_h}{\xi_h}_{\partial\CT_h}
    - \prodS{\Pi(\TS(g^{\ominus}_h))}{\xi_h}_{\partial\CT_h}
    &= \prodS{b}{\xi_h}_{\partial\CT_h}.
\end{align*}
\end{problem}

In order to write the problem in a more compact form, we introduce the
\emph{global projected right-hand side} $b_h := P_h b \in G_h$, where
$P_h : L^2(\partial\CT_h) \longrightarrow G_h$ is the projection operator
defined such that $\prodS{P_h b}{\xi_h}_{\partial\CT_h} = \prodS{b}{\xi_h}_{\partial\CT_h}$
for all $\xi_h\in G_h$. Problem~\ref{pbm:CHDG:reducedTmp} can then be rewritten as:
\begin{problem}
\label{pbm:CHDG:hybridized}
Find $g^{\ominus}_h \in G_h$ such that
\begin{align*}
  (\TI - \Pi \TS) g^{\ominus}_h = b_h.
\end{align*}
\end{problem}

Problems~\ref{pbm:CHDG:reducedTmp} and~\ref{pbm:CHDG:hybridized}
are equivalent to Problem~\ref{pbm:CHDG} because the element-wise local
problems (Problem~\ref{pbm:CHDG:local}) are well-posed.
As discussed in the introduction, Problem~\ref{pbm:CHDG:hybridized} is similar to
formulations obtained for DD and UWVF methods to solve Helmholtz problems.


\subsection{Fixed-point problem}
\label{sec:fixedpointpbm}

Problem~\ref{pbm:CHDG:hybridized} corresponds to a fixed-point problem.
In this section, we prove that the operator $\Pi \TS$ is a strict contraction.
As a consequence, the fixed-point problem is always well-posed, and it can (at least in principle) be solved with stationary iterative procedures.
The algebraic version of this system is discussed in Sections~\ref{sec:num:discretization} and \ref{sec:iter:Richardson}.

The properties of $\TS$ and $\Pi$ are established by using a norm on $G_h$ defined as
\begin{align*}
    \|g^{\ominus}_h\| := \sqrt{\sum_{K\in\CT_h}\sum_{F\in\mathcal{F}_K} \|g^{\ominus}_{K,F}\|_{F}^2},
\end{align*}
where $\|{\cdot}\|_{F}^2$ is the natural norm of $L^2(F)$.
We start with a technical lemma.

\begin{lemma}
\label{lemma:contractS}
(i) The solution of Problem~\ref{pbm:CHDG:local} verifies
\begin{align}
  \sum_{F\in\mathcal{F}_K} \|u_K + \bn_{K,F}\cdot\bq_K\|^2_{F}
  + \sum_{F\in\mathcal{F}_K} \|u_K - \bn_{K,F}\cdot\bq_K -  {g_{K,F}^\ominus}\|^2_{F}
  = \sum_{F\in\mathcal{F}_K} \| {g_{K,F}^\ominus}\|^2_{F}.
  \label{eqn:lemma:localSys}
\end{align}
(ii) The second term in the left-hand side of \eqref{eqn:lemma:localSys} vanishes
if and only if ${g_{K,F}^\ominus}=0$.
\end{lemma}

\begin{proof}
For the sake of brevity, the subscripts $K$ and $F$ are omitted for the local fields, the test functions, the unit outgoing normal and the surface data.

\emph{(i)} Taking both equations of Problem~\ref{pbm:CHDG:local} with $v=u$ and $\bp=\bq$ gives
\begin{align*}
  -\im \kappa \prodV{u}{u}_{K}
    - \prodV{\bq}{\bnabla u}_{K}
    + \prodS{\frac{1}{2} (u + \bn\cdot\bq)}{u}_{\partial K}
  &= \prodS{\frac{1}{2} g^\ominus}{u}_{\partial K} \\
  -\im \kappa \prodV{\bq}{\bq}_{K}
    - \prodV{u}{\bnabla\cdot \bq}_{K}
    + \prodS{\frac{1}{2} (u + \bn\cdot\bq)}{\bn\cdot\bq}_{\partial K}
  &= - \prodS{\frac{1}{2} g^\ominus}{\bn\cdot\bq}_{\partial K}.
\end{align*}
Integrating by parts in both equations and taking the complex conjugate lead to
\begin{align*}
  \im \kappa \prodV{u}{u}_{K}
    + \prodV{u}{\bnabla\cdot \bq}_{K}
    + \prodS{u}{\frac{1}{2} (u - \bn\cdot\bq)}_{\partial K}
  &= \prodS{u}{\frac{1}{2} g^\ominus}_{\partial K} \\
  \im \kappa \prodV{\bq}{\bq}_{K}
    + \prodV{\bq}{\bnabla u}_{K}
    - \prodS{\bn\cdot\bq}{\frac{1}{2} (u - \bn\cdot\bq)}_{\partial K}
  &= - \prodS{\bn\cdot\bq}{\frac{1}{2} g^\ominus}_{\partial K}.
\end{align*}
Adding the four previous equations yields
\begin{multline*}
  \frac{1}{2}\prodS{(u + \bn\cdot\bq)}{(u + \bn\cdot\bq)}_{\partial K}
  + \frac{1}{2}\prodS{(u - \bn\cdot\bq)}{(u - \bn\cdot\bq)}_{\partial K} \\
  = \frac{1}{2}\prodS{g^\ominus}{(u - \bn\cdot\bq)}_{\partial K}
  + \frac{1}{2}\prodS{(u - \bn\cdot\bq)}{g^\ominus}_{\partial K},
\end{multline*}
and then
\begin{align*}
    \|u + \bn\cdot\bq\|_{\partial K}^2
  + \|u - \bn\cdot\bq\|_{\partial K}^2
  = \|u - \bn\cdot\bq\|_{\partial K}^2
  - \|u - \bn\cdot\bq -  g^\ominus\|_{\partial K}^2
  + \|g^\ominus\|_{\partial K}^2,
\end{align*}
which gives the result~\eqref{eqn:lemma:localSys}.

\emph{(ii)} If the second term in the left-hand side of~\eqref{eqn:lemma:localSys}
vanishes, then $g^\ominus = u - \bn\cdot\bq$ on $\partial K$. Using this relation in
Problem~\ref{pbm:CHDG:local}, we see that $u$ and $\bq$ must satisfy
\begin{align*}
\left \{
\begin{aligned}
  \textstyle
  -\im \kappa \prodV{u}{v}_{K}
    - \prodV{\bq}{\bnabla v}_{K}
    + \prodS{\bn\cdot\bq}{v}_{\partial K}
  &= 0, \\
  \textstyle
  -\im \kappa \prodV{\bq}{\bp}_{K}
    - \prodV{u}{\bnabla\cdot \bp}_{K}
    + \prodS{u}{\bn\cdot\bp}_{\partial K}
  &= 0
\end{aligned}
\right .
\end{align*}
for all $v \in \CP_p(K)$ and $\bp \in \BCP_p(K)$,
and integration by parts shows that $u$ and $\bq$ solve the Helmholtz
equation in strong form. But as we have already seen in the proof of Theorem
\ref{thm:HDG:local}, there is no non-trivial polynomial solution, meaning
that $u=0$ and $\bq=\bzero$, and then $g^\ominus=0$. The converse statement is direct,
because the local problem is well-posed.
\end{proof}

\begin{theorem}
\label{theorem_TS}
The scattering operator $\TS$ is a strict contraction, i.e.
\begin{align*}
  \|\TS(g^{\ominus}_h)\| < \|g^{\ominus}_h\|, \quad \forall g^{\ominus}_h\in G_h\backslash\{0\}.
\end{align*}
\end{theorem}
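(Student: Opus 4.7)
The plan is to derive the claim directly from Lemma \ref{lemma:contractS} by summing the identity \eqref{eqn:lemma:localSys} over all elements $K \in \CT_h$ and interpreting the resulting sum in terms of the norm on $G_h$.

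First, I would unpack the definitions on both sides of the desired inequality. By the definition of the scattering operator,
\[
  \|\TS(g^{\ominus}_h)\|^2 = \sum_{K\in\CT_h}\sum_{F\in\CF_K} \|u_K + \bn_{K,F}\cdot\bq_K\|_F^2 = \sum_{K\in\CT_h} \|u_K + \bn\cdot\bq_K\|_{\partial K}^2,
\]
where $(u_K,\bq_K)$ is the solution of Problem~\ref{pbm:CHDG:local} with surface data $s = (g^{\ominus}_{K,F})_{F\in\CF_K}$. Similarly, by the definition of the norm on $G_h$,
\[
  \|g^{\ominus}_h\|^2 = \sum_{K\in\CT_h}\sum_{F\in\CF_K} \|g^{\ominus}_{K,F}\|_F^2 = \sum_{K\in\CT_h} \|s\|_{\partial K}^2.
\]

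Next, I would apply part~(i) of Lemma~\ref{lemma:contractS} element by element and sum over $K \in \CT_h$. This immediately yields
\[
  \|\TS(g^{\ominus}_h)\|^2 + \sum_{K\in\CT_h}\|u_K - \bn\cdot\bq_K - s\|_{\partial K}^2 = \|g^{\ominus}_h\|^2,
\]
from which the non-strict inequality $\|\TS(g^{\ominus}_h)\| \leq \|g^{\ominus}_h\|$ follows at once, since the second term on the left is non-negative.

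Finally, to upgrade to a strict inequality, I would argue by contrapositive using part~(ii) of the lemma. If $\|\TS(g^{\ominus}_h)\| = \|g^{\ominus}_h\|$, then the non-negative sum above must vanish, which forces $\|u_K - \bn\cdot\bq_K - s\|_{\partial K}^2 = 0$ on every element $K$. By part~(ii), this implies that the data $s = (g^{\ominus}_{K,F})_{F\in\CF_K}$ vanishes on $\partial K$ for every $K$, i.e., $g^{\ominus}_h = 0$ in $G_h$. The contrapositive gives the strict inequality for $g^{\ominus}_h \in G_h \setminus \{0\}$. The main conceptual work is entirely contained in the lemma; no substantive new obstacle is expected here beyond carefully tracking the correspondence between the local surface data $s$ and the restriction of the global hybrid variable $g^{\ominus}_h$ to $\partial K$.
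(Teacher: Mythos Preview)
Your proposal is correct and follows essentially the same approach as the paper: both arguments sum the identity~\eqref{eqn:lemma:localSys} from Lemma~\ref{lemma:contractS}\,(i) over all elements and then invoke part~(ii) to obtain strictness. Your version is in fact more explicit than the paper's proof, which compresses the two steps into a single displayed strict inequality and leaves the contrapositive reasoning implicit.
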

\begin{proof}
Let $g^{\ominus}_h\in G_h\backslash\{0\}$. By Lemma~\ref{lemma:contractS}, one has
\begin{align*}
  \sum_{F\in\CF_K} \|u_K + \bn_{K,F}\cdot\bq_K\|_{F}^2 < \sum_{F\in\CF_K} \|g^{\ominus}_{K,F}\|_{F}^2.
\end{align*}
The equality cannot happen because $g^{\ominus}_h\neq0$.
Then, by using the definition of $\TS$ (i.e.~equation \eqref{eqn:defS}), one has
\begin{align*}
  \sum_{F\in\CF_K} \|\TS(g^{\ominus}_h)|_{K,F}\|_{F}^2 < \sum_{F\in\CF_K} \|g^{\ominus}_{K,F}\|_{F}^2.
\end{align*}
Summing this estimate over all $K\in\CT_h$ gives the result.
\end{proof}

The global scattering operator $\TS$ is always strictly contracting whereas, in a continuous context, it preserves energy.
The proof of Theorem \ref{theorem_TS} uses the fact that there are no polynomial solution to the Helmholtz equation, and therefore, the strict contraction property of $\TS$ is a numerical artifact that is not physical.
This is related to the fact the upwind DG scheme is a dissipative method to start with \cite{ainsworth_monk_muniz_2006a}.

\begin{theorem}
The exchange operator $\Pi$ is a contraction, i.e.
\begin{align*}
    \|\Pi(g^{\ominus}_h)\| \leq \|g^{\ominus}_h\|, \quad \forall g^{\ominus}_h\in G_h.
\end{align*}
In addition, if $\Gamma_\TR=\emptyset$, $\Pi$ is an involution, i.e.~$\Pi^2=\TI$, and an isometry, i.e.
\begin{align*}
  \|\Pi(g^{\ominus}_h)\| = \|g^{\ominus}_h\|, \quad \forall g^{\ominus}_h\in G_h.
\end{align*}
\end{theorem}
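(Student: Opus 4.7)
The plan is to compute $\|\Pi(g^{\ominus}_h)\|^2$ directly from the definition of the norm and of $\Pi$, by partitioning the pairs $(K,F)$ with $F\in\CF_K$ into four classes according to which clause of \eqref{eq_definition_Pi} applies. Because the norm on $G_h$ is the unweighted $L^2$-sum over \emph{all} pairs $(K,F)$ (not over faces of the mesh), interior faces will naturally be counted twice — once from each neighboring element — which is exactly what makes the swap $g^{\oplus}_{K,F}\leftrightarrow g^{\oplus}_{K',F}$ norm-preserving.

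Concretely, I will write
\begin{equation*}
  \|\Pi(g^{\ominus}_h)\|^2
  = \sum_{K\in\CT_h} \sum_{F\in\CF_K} \|\Pi(g^{\ominus}_h)|_{K,F}\|_{F}^2
\end{equation*}
and treat the four cases separately. For an interior face $F$ shared by $K$ and $K'$, the element $K$ contributes $\|g^{\ominus}_{K',F}\|_F^2$ and the element $K'$ contributes $\|g^{\ominus}_{K,F}\|_F^2$, so the pair of contributions coincides with the one coming from $g^{\ominus}_h$ itself. For a face $F\subset\Gamma_\TD$ the contribution is $\|-g^{\ominus}_{K,F}\|_F^2=\|g^{\ominus}_{K,F}\|_F^2$, and for $F\subset\Gamma_\TN$ it is $\|g^{\ominus}_{K,F}\|_F^2$, both unchanged. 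Finally, for $F\subset\Gamma_\TR$ the contribution is $0$, so that
\begin{equation*}
  \|\Pi(g^{\ominus}_h)\|^2
  = \|g^{\ominus}_h\|^2 - \sum_{K\in\CT_h}\sum_{\substack{F\in\CF_K\\ F\subset\Gamma_\TR}} \|g^{\ominus}_{K,F}\|_F^2
  \leq \|g^{\ominus}_h\|^2,
\end{equation*}
which proves the contraction property, and the isometry when $\Gamma_\TR=\emptyset$.

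For the involution claim under $\Gamma_\TR=\emptyset$, I will check $\Pi^2=\TI$ on each face-type. On an interior face shared by $K$ and $K'$, $\Pi$ sends $g^{\oplus}_{K,F}\mapsto g^{\oplus}_{K',F}$ and a second application swaps back. On $\Gamma_\TD$, two sign flips cancel, and on $\Gamma_\TN$ the identity is applied twice. This exhausts all cases, hence $\Pi^2=\TI$. I expect no serious obstacle: the statement is essentially a bookkeeping verification once the convention that the norm sums over $(K,F)$ pairs is made explicit, the only mild subtlety being to distinguish a sum over faces of the mesh from a sum over (element, face) incidences, so that the interior-face term is correctly counted twice.
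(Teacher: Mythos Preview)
Your proposal is correct and is precisely the case-by-case verification that the paper has in mind when it writes that ``these results are straightforward consequences of the definition of $\Pi$.'' The only point worth noting is that the paper does not spell out any of the bookkeeping you describe, so your write-up is in fact more detailed than the original proof.
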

\begin{proof}
These results are straightforward consequences of the definition of $\Pi$.
\end{proof}

As a consequence of the two previous theorems, we have the following result.

\begin{corollary}
\label{corol:contraction}
The operator $\Pi\TS$ is a strict contraction, i.e.
\begin{align*}
    \|\Pi\TS(g^{\ominus}_h)\| < \|g^{\ominus}_h\|, \quad \forall g^{\ominus}_h\in G_h\backslash\{0\}.
\end{align*}
\end{corollary}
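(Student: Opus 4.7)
The plan is to simply chain the two preceding results, since the strict contractivity of $\TS$ (the previous theorem) combined with the nonexpansivity of $\Pi$ gives the desired estimate in one line. More precisely, given $g^{\ominus}_h \in G_h\setminus\{0\}$, I would first apply the bound $\|\Pi(\eta_h)\| \leq \|\eta_h\|$ for any $\eta_h\in G_h$ (valid for arbitrary boundary partition, including the Robin case) to the element $\eta_h := \TS(g^{\ominus}_h)$, and then invoke the strict bound $\|\TS(g^{\ominus}_h)\| < \|g^{\ominus}_h\|$ from Theorem~\ref{theorem_TS}. Concatenating:
\begin{equation*}
\|\Pi\TS(g^{\ominus}_h)\| \;\leq\; \|\TS(g^{\ominus}_h)\| \;<\; \|g^{\ominus}_h\|.
\end{equation*}

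There is essentially no obstacle here; the real work has already been done in Lemma~\ref{lemma:contractS}, where the local energy identity \eqref{eqn:lemma:localSys} together with part (ii) produces the strict inequality for $\TS$, and in the preceding theorem on $\Pi$, whose nonexpansivity follows directly from inspecting the four cases in the definition~\eqref{eq_definition_Pi} (swapping contributions between neighbors on interior faces, sign flip on $\Gamma_\TD$, identity on $\Gamma_\TN$, zero on $\Gamma_\TR$). The only subtlety worth flagging is that one should not attempt to argue ``$\Pi$ strict and $\TS$ nonexpansive''—it is the reverse pairing that works, because $\Pi$ is generally not strictly contracting (it is an isometry when $\Gamma_\TR=\emptyset$), whereas $\TS$ is strictly contracting on all of $G_h\setminus\{0\}$.

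I would keep the proof to a single short paragraph, essentially the display above preceded by ``By the previous two theorems,'' since Corollary~\ref{corol:contraction} is an immediate composition statement and the paper already advertises it as a ``consequence of the two previous theorems.''
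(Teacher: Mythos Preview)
Your proposal is correct and matches the paper's approach exactly: the paper presents Corollary~\ref{corol:contraction} without an explicit proof, simply noting that it is ``a consequence of the two previous theorems,'' which is precisely the chaining $\|\Pi\TS(g^{\ominus}_h)\| \leq \|\TS(g^{\ominus}_h)\| < \|g^{\ominus}_h\|$ you wrote down. Your remark about needing strictness on $\TS$ (not on $\Pi$) is also on point and consistent with the discussion the paper gives immediately after the corollary.
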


The strict contraction property of Corollary \ref{corol:contraction} is due to the fact that $\Pi$ and/or $\TS$ dissipate energy.
Actually, the global scattering operator $\TS$ is always strictly contracting.
As discussed above, this can be related to the fact the upwind DG scheme is a dissipative method.
On the other hand, the global exchange operator $\Pi$ can only dissipate energy in the presence of a Robin boundary (see the last line of \eqref{eq_definition_Pi}), otherwise it is an involution.
Therefore, we may identify two possible sources of dissipation. The first source is numerical dissipation which is always present, but
may become small as the mesh is refined, leading to possibly slow convergence of fixed
point iterations in energy-preserving problem. The other source of dissipation comes
from physical absorption and should lead to faster convergence rates on fine meshes.
The numerical examples we present in Section~\ref{sec:iter:Richardson} clearly depict how
the presence or absence of physical dissipation impact the convergence rates of fixed
point iterations.

Let us note that, for conservative methods (including standard conforming finite elements), where $\TS$ does not dissipate, $\Pi\TS$ should preserve energy if there is no physical dissipation.
In fact, the convergence of standard DD algorithms is proven only for energy-preserving problems with relaxation, e.g.~\cite{claeys2022robust}.
It has been proven recently in \cite{pernet2022ultra} that the iteration matrix of a Trefftz DG method is also a strict contraction for a configuration with a Robin boundary condition.
To the best of our knowledge, this is the only other example of finite element method that can be written with a strictly contracting iterative matrix for Helmholtz problems.


\section{Linear algebraic systems}
\label{sec:linSys}

In this section, the algebraic systems resulting from the DG discretization and its two possible hybridizations are studied for two-dimensional problems.
After a description of the polynomial basis and reference benchmarks in Sections~\ref{sec:num:discretization} and~\ref{sec:num:benchmarks}, respectively, the required memory storage is discussed in Section~\ref{sec:num:storage}.
The condition numbers of the local element-wise matrices and the global reduced matrices are discussed in Sections~\ref{sec:num:condLoc} and~\ref{sec:num:condGlo}, respectively.


\subsection{Polynomial basis functions}
\label{sec:num:discretization}

The physical fields $u_h$ and $\bq_h$ are represented with standard hierarchical shape functions.
These functions are built with tensor products of Lobatto shape functions
(see e.g.~\cite[section 2.2.3]{solin2003higher} and~\cite{beriot2016efficient}).
For triangular elements, they are classified into vertex, edge, and bubble functions.
Since the bubble functions vanish on the edges of the triangle, only the degrees of freedom associated
to vertex and edge functions are involved in the boundary and interface integrals of the
variational formulations. In remainder of this work, the edges of the triangular elements are
called ``faces'' in order to follow the general terminology.

The fields defined on the skeleton, i.e.~$\widehat{u}_h$ for HDG and $g_h^\ominus$ for CHDG,
are univariate polynomials. A possible
choice for the shape functions would be the Lobatto shape functions,
which correspond to the restriction of the shape functions used for the physical fields.
Instead, we consider scaled Legendre shape functions, which are orthogonal
in $L^2(F)$ for each face $F$.
For each element, they are scaled in such a way that the local mass matrices are the identity matrix, \text{i.e.}
\begin{align*}
  \prodV{\phi_i^F}{\phi_j^F}_F = \delta_{ij}, \quad \text{for } i,j=1,\ldots,N_\mathrm{dof\cdot per\cdot fce},
\end{align*}
where $\phi_i^F$ and $\phi_j^F$ are the shape functions associated with the face $F$, and $N_\mathrm{dof\cdot per\cdot fce}$ is the number of degrees of freedom per face.

The Lobatto functions and the scaled Legendre functions give rigorously the same numerical solution (up to floating point errors), as they are two equivalent sets of basis functions, but they lead to different algebraic systems.
Let us consider the algebraic system resulting from the finite element discretization of Problem~\ref{pbm:CHDG:reducedTmp}.
With the Lobatto functions, the first term of this problem corresponds to a mass matrix in the algebraic system.
By contrast, with the scaled Legendre functions, it corresponds to an identity matrix as the shape functions are orthonormal.
In fact, the system corresponding to the scaled Legendre functions, denoted $\matalg{A}\vecalg{g}=\vecalg{b}$, can be obtained from the system corresponding to the Lobatto functions, denoted $\matalg{A}_\mathrm{Lob}\vecalg{g}_\mathrm{Lob}=\vecalg{b}_\mathrm{Lob}$, by using a symmetric preconditioning:
\begin{align}
  \underbrace{\big(\matalg{M}_\mathrm{Lob}^{-\nicefrac{1}{2}}\matalg{A}_\mathrm{Lob}\matalg{M}_\mathrm{Lob}^{-\nicefrac{1}{2}}\big)}_{\displaystyle\matalg{A}}
  \underbrace{\big(\matalg{M}_\mathrm{Lob}^{\nicefrac{1}{2}}\vecalg{g}_\mathrm{Lob}\big)}_{\displaystyle\vecalg{g}}
  =
  \underbrace{\big(\matalg{M}_\mathrm{Lob}^{-\nicefrac{1}{2}}\vecalg{b}_\mathrm{Lob}\big)}_{\displaystyle\vecalg{b}},
  \label{eqn:prec}
\end{align}
where $\matalg{M}_{\mathrm{Lob}}$ is the mass matrix associated to the faces.

In preliminary comparison studies (not shown), we have observed that, for both HDG and CHDG methods, the convergence of the iterative solution procedures (without preconditioning strategy) is faster with the scaled Legendre functions than with the Lobatto functions.
Here is a partial explanation.
With the scaled Legendre functions, the scalar product $(\cdot,\cdot)_F$ of two fields is equal to the algebraic inner product on the corresponding components.
Similarly, the $L^2$-norm of a field is equal to the $2$-norm of its components.
Therefore, the inner product and the norm used in the standard iterative solution procedures are in some sense \emph{``natural''} for the considered problems.
Note that this approach is rigorously equivalent to using the Lobbato functions with a symmetric preconditioning with the mass matrix $\matalg{M}_{\mathrm{Lob}}$, see equation \eqref{eqn:prec}.
In fact, that preconditioning approach is equivalent to using $\matalg{M}_{\mathrm{Lob}}$ as a left preconditioner and using the scalar $(\cdot,\cdot)_F$ as inner product in weighted Krylov methods.

For the sake of brevity, only results with the scaled Legendre functions are presented in the remainder of this article.


\subsection{Reference benchmarks}
\label{sec:num:benchmarks}

To study the properties of the algebraic systems and the convergence of iterative solution procedures, we consider three benchmarks corresponding to different physical configurations, already used in~\cite{chaumont2022controllability}.
Snapshots of the real part of the solutions are shown in Figure~\ref{fig:benchmarks}.
The numerical simulations have been performed with a dedicated \textsf{MATLAB} code.
The mesh generation and the visualization have been done with \textsf{gmsh}~\cite{geuzaine2009gmsh} (version 4.11.1).
In all the cases, third-degree polynomial bases, i.e.~$p=3$, have been used.
The parameter $h$ is the element size provided in \textsf{gmsh}.

\begin{figure}[tb!]
\centering
\begin{subfigure}[b]{0.46\textwidth}
  \centering
  \caption{Benchmark 1 (plane wave)}
  \includegraphics[width=0.99\textwidth]{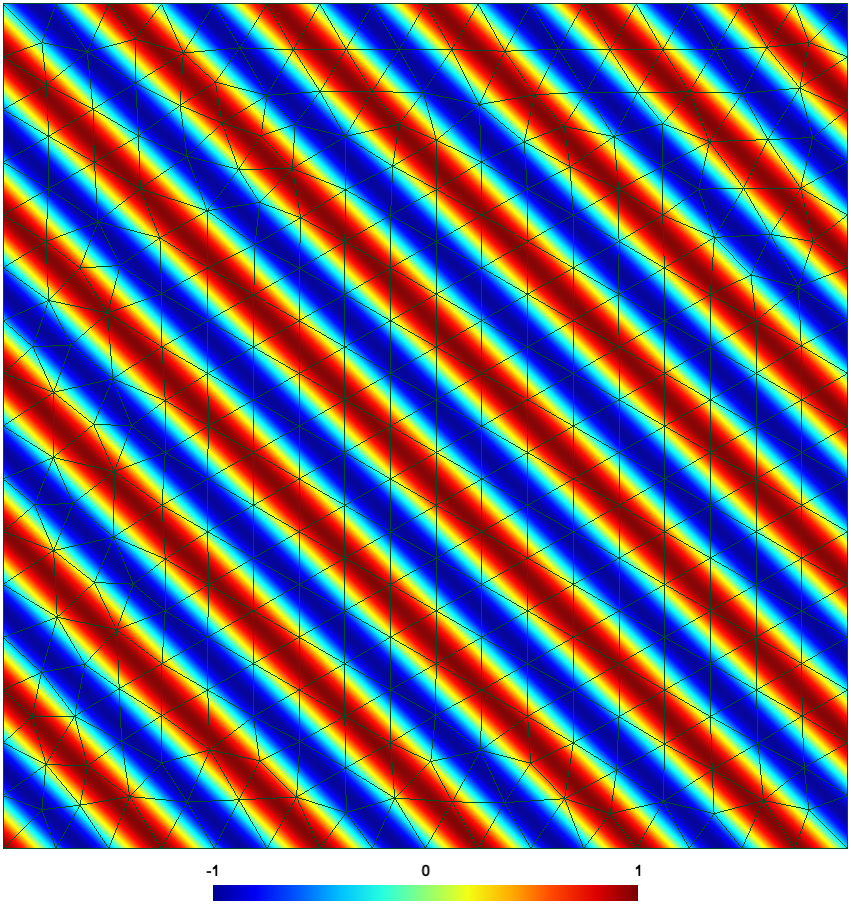}
\end{subfigure}
\qquad
\begin{subfigure}[b]{0.46\textwidth}
  \centering
  \caption{Benchmark 2 (cavity)}
  \includegraphics[width=0.99\textwidth]{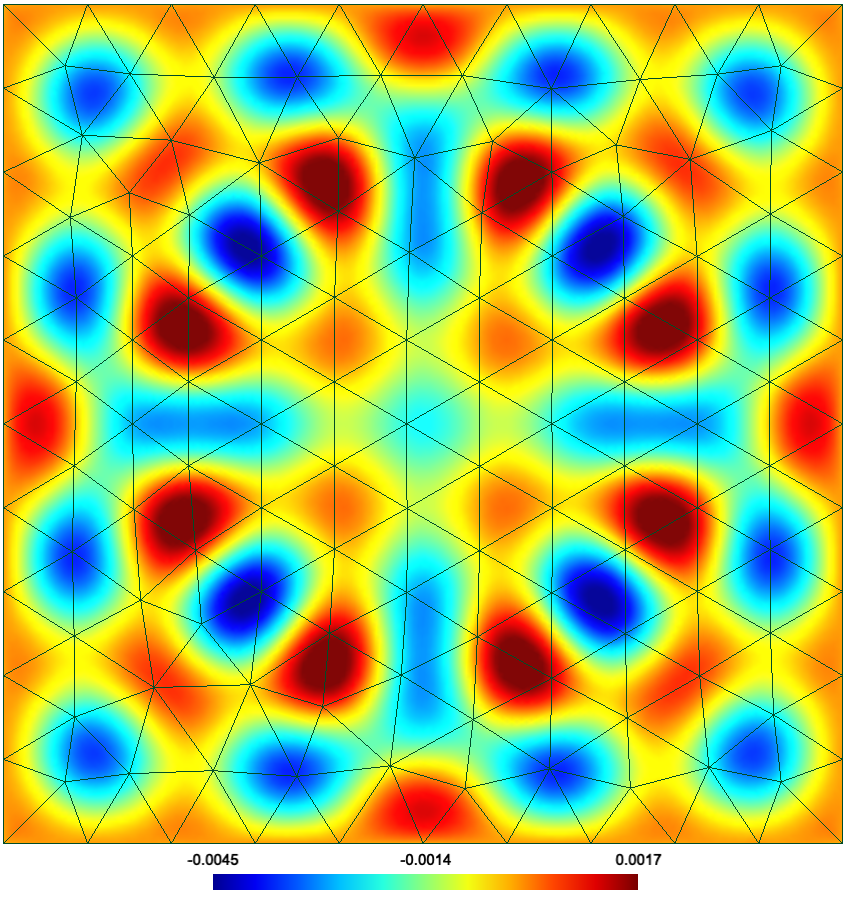}
\end{subfigure}
\bigskip
\begin{subfigure}[b]{0.9\textwidth}
  \centering
  \caption{Benchmark 3 (half-open waveguide)}
  \includegraphics[width=0.99\textwidth]{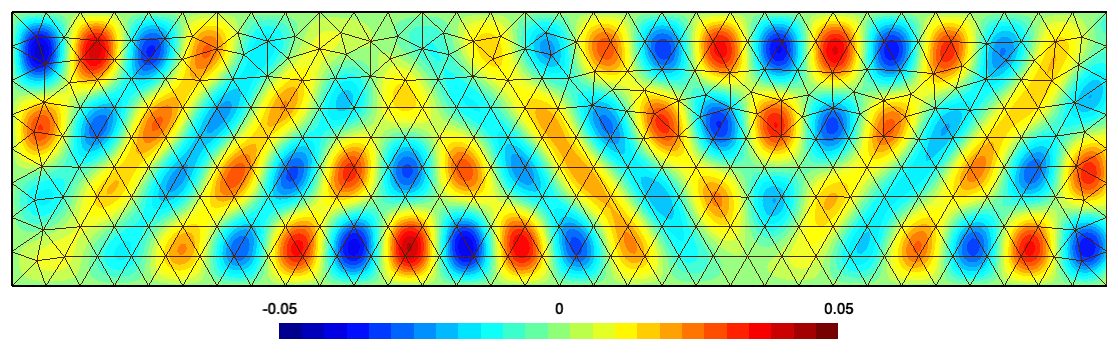}
\end{subfigure}
\caption{Snapshots of the real part of the solution for the three benchmarks with the default parameters.}
\label{fig:benchmarks}
\end{figure}

\paragraph{Benchmark 1 (Plane wave).}
The first benchmark is a simple plane wave propagating in the unit square domain $\Omega={]0,1[}\times{]0,1[}$.
The reference solution reads
\begin{align*}
  u_\mathrm{ref}(\bx) = e^{\im \kappa \bd\cdot\bx},
\end{align*}
with the propagation direction $\bd = (\cos\theta, \sin\theta)$ and a given angle $\theta$.
A non-homogeneous Robin condition is prescribed on the boundary of the domain
(i.e.~$\Gamma_{\mathrm{R}} := \partial\Omega$) with the appropriate right-hand side term.
By default, the parameters are $\kappa=15\pi$ and $h=1/16$.
We have also considered a wavenumber twice larger, $\kappa=30\pi$, with a spatial step $h=1/34$ corresponding to a relative error close to the one with the default parameters.

\paragraph{Benchmark 2 (Cavity).}
The second benchmark is a cavity problem.
The computational domain is again the unit square domain $\Omega={]0,1[}\times{]0,1[}$.
A homogeneous Dirichlet condition is prescribed on the boundary
of the domain (i.e.~$\Gamma_{\mathrm{D}} := \partial\Omega$), and a unit source term is
used in the Helmholtz equation:
\begin{align*}
\left\{
\begin{aligned}
  -\Delta u - \kappa^2 u &= 1, && \text{in $\Omega$}, \\
  u &= 0, && \text{on $\Gamma_{\mathrm{D}}$}. \\
\end{aligned}
\right.
\end{align*}
The reference solution is real. The eigenvalues and eigenmodes of this problem are
$\kappa^2_{n,m} := (n^2+m^2)\pi^2$ and $u_{n,m} := \sin(n\pi x_1)\sin(m\pi x_2)$,
respectively, for all $m,n > 0$. The reference solution is obtained semi-analytically
by truncating the Fourier expansion (see e.g.~\cite{chaumont2022controllability}).
By default, the parameters are $\kappa=(7+1/10)\sqrt{2}\pi$ and $h=1/10$.
We have also considered a wavenumber closer to an eigenvalue, $\kappa=(7+1/100)\sqrt{2}\pi$,
with a spatial step $h=1/15$ corresponding to a relative error close to the one with the
default parameters.

\paragraph{Benchmark 3 (Waveguide).}
The last benchmark is a half open waveguide problem.
The domain is $\Omega={]0,4[}\times{]0,1[}$, with a given length $L$.
The open side of the waveguide corresponds to the right side of $\Omega$.
An incident plane wave is prescribed at the open side by using a non-homogeneous Robin condition:
\begin{align*}
  \partial_n u - \im \kappa u &= e^{\im \kappa \bd\cdot\bx},
  \quad \text{on $\Gamma_{\mathrm{R}} := \{4\}\times{]0,1[}$},
\end{align*}
with the propagation direction $\bd = (\cos\theta, \sin\theta)$ and a given angle $\theta$.
A homogeneous Dirichlet condition is prescribed on the other sides of $\Omega$.
The reference solution is computed by using a semi-analytical approach described in
\cite{chaumont2022controllability}. By default, the parameters are $\kappa=6\pi$ and $h=1/8$.
We have also considered a wavenumber twice larger, $\kappa=12\pi$, with a spatial step $h=1/17$
corresponding to a relative error close to the one with the default parameters.


\subsection{Memory storage}
\label{sec:num:storage}

The total numbers of degrees of freedom (DOFs) with the DG, HDG and CHDG methods are given respectively by
\begin{align*}
  \#(\mathrm{dof}_\mathrm{DG}) &= 3 N_\mathrm{tri} N_\mathrm{dof\cdot per\cdot tri}, \\
  \#(\mathrm{dof}_\mathrm{HDG}) &= N_\mathrm{fce} N_\mathrm{dof\cdot per\cdot fce}, \\
  \#(\mathrm{dof}_\mathrm{CHDG}) &= 3 N_\mathrm{tri} N_\mathrm{dof\cdot per\cdot fce} = \left(N_\mathrm{fce\cdot bnd} + 2 N_\mathrm{fce\cdot int}\right) N_\mathrm{dof\cdot per\cdot fce},
\end{align*}
with the number of faces $N_\mathrm{fce}$, the number of boundary faces
$N_\mathrm{fce\cdot bnd}$, the number of interior faces $N_\mathrm{fce\cdot int}$
and the number of triangles $N_\mathrm{tri}$. Let us note that
$N_\mathrm{fce} = N_\mathrm{fce\cdot bnd} + N_\mathrm{fce\cdot int}$ and
$3 N_\mathrm{tri} = N_\mathrm{fce\cdot bnd} + 2 N_\mathrm{fce\cdot int}$.
For a scalar field, the numbers of DOFs per triangle and per face are given respectively by
$N_\mathrm{dof\cdot per\cdot tri} = (p+1)(p+2)/2$ and $N_\mathrm{dof\cdot per\cdot fce} = p+1$,
where $p$ is the polynomial degree.

The number of DOFs is obviously far smaller with the hybridizable methods.
It is nearly twice larger with CHDG than with HDG because there are two
characteristic variables per interior face and only one numerical trace.
The results would be similar in three dimensions.

Upper bounds for the numbers of non-zero elements in the global sparse matrix $\matalg{A}$
of the DG, HDG and CHDG systems are given respectively by
\begin{align*}
  \#(\mathrm{nnz}_\mathrm{DG}) &\lesssim N_\mathrm{tri} \left( 7 N_\mathrm{dof\cdot per\cdot tri}^2 + 54 N_\mathrm{dof\cdot per\cdot fce}^2 \right), \\
  \#(\mathrm{nnz}_\mathrm{HDG}) &\lesssim N_\mathrm{fce} \left( 5 N_\mathrm{dof\cdot per\cdot fce}^2 \right),  \\
  \#(\mathrm{nnz}_\mathrm{CHDG}) &\lesssim N_\mathrm{fce} \left( 8 N_\mathrm{dof\cdot per\cdot fce}^2 \right).
\end{align*}
For the hybridizable methods, the matrix $\matalg{A}$ is obtained after the elimination of the
physical unknowns. These bounds have been computed by using the rough approximation
$N_\mathrm{fce\cdot bnd}\ll N_\mathrm{fce\cdot int}$, which is valid only for
large benchmarks. Under this approximation, we have
\begin{align*}
  \frac{\#(\mathrm{nnz}_\mathrm{CHDG})}{\#(\mathrm{nnz}_\mathrm{HDG})} \approx 1.6.
\end{align*}
For the matrices of the reference benchmarks with the default parameters, this ratio varies
between $1.54$ and $1.66$ (see Table~\ref{tab:cond}). For three-dimensional problems with
tetrahedral elements, a similar reasoning leads to a ratio equal to $1.43$.
Therefore, although there are nearly twice as many DOFs with CHDG than with HDG,
the number of non-zero elements is not increased as much.

\begin{table}[tb!]
\caption{Number of degrees of freedom (\#dof) and number of non-zero entries (\#nnz) in $\matalg{A}$ for the different DG methods (i.e.~standard DG without hybridization, HDG and CHDG).}
\begin{center}\vspace{-1em}\small
\begin{tabular}{|c|c|rr|}
  \hline
  & & \#dof & \#nnz \\
  \hline
  & DG   & $18420$ & $734626$ \\
  Benchmark 1
  & HDG  &  $3812$ &  $74192$ \\
  & CHDG &  $7368$ & $109082$ \\
  \hline
  & DG   & $7260$ & $286715$ \\
  Benchmark 2
  & HDG  & $1532$ &  $27970$ \\
  & CHDG & $2904$ &  $43830$ \\
  \hline
  & DG   & $19260$ & $765006$ \\
  Benchmark 3
  & HDG  &  $4012$ &  $75149$ \\
  & CHDG &  $7704$ & $116202$ \\
  \hline
\end{tabular}
\end{center}
\label{tab:cond}
\end{table}


\subsection{Conditioning of the local matrices}
\label{sec:num:condLoc}

With the hybridizable approaches, the construction of the matrix $\matalg{A}$, and the application of $\matalg{A}$ in matrix-free iterative procedures, requires the solution of local element-wise algebraic systems.
For the HDG and CHDG methods, these systems correspond to Problems~\ref{pbm:HDG:local} and~\ref{pbm:CHDG:local}, respectively.
A bad conditioning of these systems could impact the quality of the numerical solution, regardless of the solution procedure.

As a preliminary study of the conditioning of the local systems, we first consider an elementary configuration used in~\cite{gopalakrishnan2015stabilization}.
The local systems are defined on a square element $K$ of side length $h$ with the lowest polynomial degree, i.e.~$p=0$.
With the HDG method, the local matrix corresponding to Problems~\ref{pbm:HDG:local} with the shape functions $\phi_1=1$, $\bphi_1=[1,0]^\top$ and $\bphi_2=[0,1]^\top$ reads
\begin{align*}
  \matalg{A}_\text{loc} = \mathrm{diag}(4h-\im\kappa h^2, -\im\kappa h^2, -\im\kappa h^2)
\end{align*}
and the condition number of this matrix is
\begin{align}
  \mathrm{cond}(\matalg{A}_\text{loc}) = \sqrt{1 + 16/(\kappa h)^2}.
  \label{eqn:condLoc:HDG}
\end{align}
With the CHDG method, the local matrix corresponding to Problem~\ref{pbm:CHDG:local} reads
\begin{align*}
  \matalg{A}_\text{loc} = \mathrm{diag}(2h-\im\kappa h^2, h-\im\kappa h^2, h-\im\kappa h^2)
\end{align*}
and the condition number of this matrix is
\begin{align}
  \mathrm{cond}(\matalg{A}_\text{loc}) = \sqrt{((\kappa h)^2 + 4)/((\kappa h)^2 + 1)}.
  \label{eqn:condLoc:CHDG}
\end{align}
The condition number of the HDG local matrix is always the largest.
In addition, this matrix becomes ill-conditioned as $\kappa h$ goes to zero, whereas the CHDG local matrix stays well-conditioned with $\mathrm{cond}(\matalg{A}_\text{loc})\approx 2$ for small values of $\kappa h$.
Although this simple setting is not representative of practical situations, it already highlights the influence of the variables used in the hybridization on the conditioning of the local matrices.

\begin{figure}[tb!]
  \centering
  \includegraphics[width=0.80\textwidth]{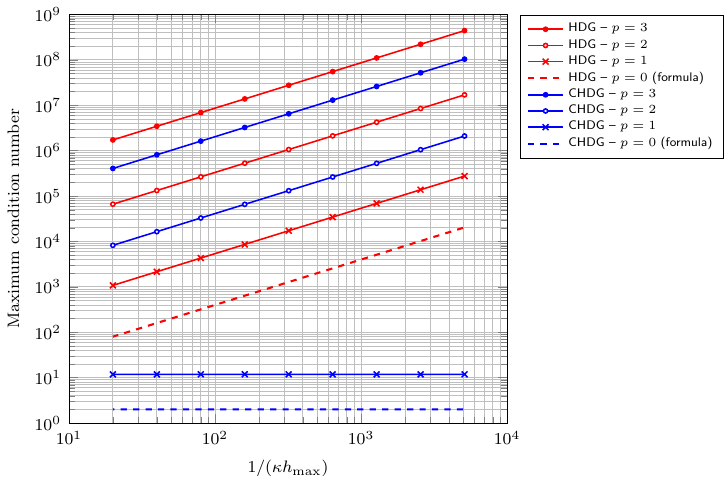}
  \caption{Maximum condition number of the local matrices with HDG (red curves) and CHDG (blue curves) as a function of $1/(\kappa h_{\max})$ for basis functions with polynomial degrees $p=1,2$ and $3$, where $h_{\max}$ is the length of the longest edge. The condition numbers corresponding to formulas \eqref{eqn:condLoc:HDG} and \eqref{eqn:condLoc:CHDG} are plotted with dashed lines.}
  \label{fig:num:condLoc}
\end{figure}

To continue the study, we consider a non-structured mesh for the unit square $\Omega={]0,1[^2}$.
This mesh is made of 1478 triangles and the length of the longest edge is close to $h_{\max}=0.05$.
The condition number of the corresponding local element-wise systems is computed for both HDG and CHDG, with different polynomial degrees $p=1,2,3$ and different wavenumbers $\kappa$.

The maximum condition number is plotted as a function of $1/(\kappa h_{\max})$ on Figure~\ref{fig:num:condLoc} for the different configurations.
The value $1/(\kappa h_{\max})$ is a measure of the mesh density in the denser region of the mesh.
We observe that the condition number increases linearly with $1/(\kappa h_{\max})$ in all the cases, except for CHDG with $p=1$.
Therefore, refining the mesh for a given wavenumber, or using a smaller wavenumber with a given mesh, increases the condition number of the local matrices.
Comparing the results with the HDG and CHDG methods for a given polynomial degree $p$, we observe that the condition number is always higher with HDG than with CHDG.
Increasing $p$ increases the condition number in all the cases.


\subsection{Conditioning of the global matrices}
\label{sec:num:condGlo}

The condition number of the global matrix $\matalg{A}$ is plotted a function of $1/(\kappa h_{\max})$ for the DG, HDG and CHDG methods on Figure~\ref{fig:numResu:condGlo}.
For each benchmark, two wavenumbers have been considered: the default wavenumber of the benchmark (denoted $\kappa_1$), and a second wavenumber corresponding to a more challenging case (denoted $\kappa_2$).
The second wavenumber is twice larger for benchmarks 1 and 3, and closer to a resonance mode for benchmark 2.
The condition number has been computed with the function \texttt{condest} in \textsf{MATLAB}.
For all the results, the relative error on the numerical solution is smaller than
$10^{-1}$. The black squares correspond to configurations with a relative error close to $10^{-2}$.

\begin{figure}[tb!]
  \centering
  \begin{subfigure}[b]{0.48\linewidth}
    \centering
    \caption{Benchmark 1 (plane wave) with $\left|\begin{array}{@{\:}l}\kappa_1=15\pi\\\kappa_2=30\pi\end{array}\right.$}
    \includegraphics[width=0.99\textwidth]{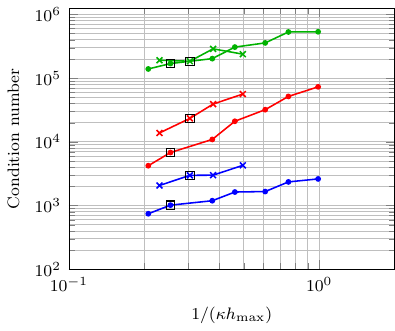}
  \end{subfigure}
  \hfill
  \begin{subfigure}[b]{0.48\linewidth}
    \centering
    \caption{Benchmark 2 (cavity) with $\left|\begin{array}{@{\:}l}\kappa_1=7.1\sqrt{2}\pi\\\kappa_2=7.01\sqrt{2}\pi\end{array}\right.$}
    \includegraphics[width=0.99\textwidth]{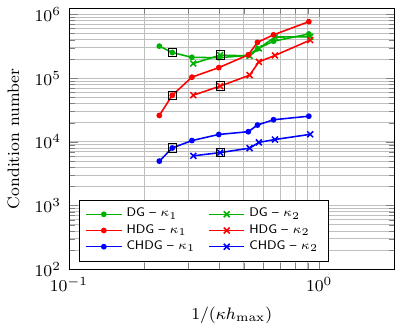}
  \end{subfigure}
  \\ \medskip
  \begin{subfigure}[b]{0.48\linewidth}
    \centering
    \caption{Benchmark 3 (waveguide) with $\left|\begin{array}{@{\:}l}\kappa_1=6\pi\\\kappa_2=12\pi\end{array}\right.$}
    \includegraphics[width=0.99\textwidth]{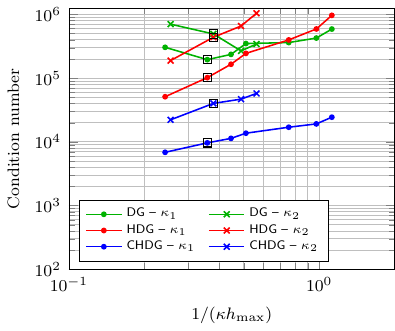}
  \end{subfigure}
  \caption{Condition number of $\matalg{A}$, the matrix of the physical system (for DG) or the matrix of the reduced system (for HDG and CHDG), as a function of $1/(\kappa h_{\max})$ for the three benchmarks, where $h_{\max}$ is the length of the longest edge.
  For each benchmark, two wavenumbers are considered, $\kappa_1$ and $\kappa_2$.
  The black squares correspond to configurations with a relative error close to $10^{-2}$.}
  \label{fig:numResu:condGlo}
\end{figure}

We observe on Figure~\ref{fig:numResu:condGlo} that the condition number is always smaller with CHDG than with HDG and DG by one or two orders of magnitude in nearly all the cases.
Moreover, the condition number increases nearly linearly with $1/(\kappa h_{\max})$ for DG and CHDG, while the increase is nearly quadratic for HDG.

The influence of $\kappa$ on the condition number is similar for HDG and CHDG.
Indeed, for each benchmark, the condition number is larger with the larger wavenumber.
By contrast, the condition number for the DG method without hybridization does not vary much with $\kappa$.


\section{Iterative solution procedures}
\label{sec:iterProc}

In this section, we study the efficiency of iterative procedures for solving the linear systems
resulting from the DG discretization and the two hybridization strategies. With the
CHDG approach, the fixed-point iterative procedure can be considered thanks to
the specific structure of the global matrix, which we analyzed in Section~\ref{sec:redSys}.
The convergence of the fixed-point iterative scheme with CHDG is discussed in
Section~\ref{sec:iter:Richardson}. The performance of DG, HDG and CHDG with
standard iterative schemes is discussed in Section~\ref{sec:iter:comp}.


\subsection{Convergence of the fixed-point iterative scheme for CHDG}
\label{sec:iter:Richardson}

We consider the algebraic system obtained by using the CHDG approach (Problem~\ref{pbm:CHDG:hybridized}) with the discretization described in Section~\ref{sec:num:discretization}.
This CHDG system can be written as
\begin{align*}
  (\matalg{I} - \matalg{\Pi}\matalg{S})\vecalg{g} = \vecalg{b},
\end{align*}
where $\matalg{I}$, $\matalg{\Pi}$ and $\matalg{S}$ are the identity, exchange and scattering matrices, respectively.
As the operator $\Pi\TS$ is a strict contraction (Corollary~\ref{corol:contraction}), the spectral radius of $\matalg{\Pi}\matalg{S}$ is strictly lower than $1$, i.e.~$\rho(\matalg{\Pi}\matalg{S}) < 1$.
Therefore, the Richardson iterative scheme applied to this system shall converge without relaxation (see e.g.~\cite{saad2003iterative}).
For a given initial guess $\vecalg{g}^{(0)}$, the procedure reads
\begin{align*}
  \vecalg{g}^{(\ell+1)} &= \matalg{\Pi} \matalg{S} \vecalg{g}^{(\ell)} + \vecalg{b}, \qquad \text{for $\ell=0,1,\dots$}
\end{align*}
If the eigenvalues of the iteration operator are far from the unit disk,
this procedure will converge rapidly.
As discussed in Section \ref{sec:fixedpointpbm}, this will depend on both the dissipative properties
of the upwind DG scheme and on the physical dissipation in the problem under consideration.

As a preliminary verification, we discuss the eigenvalues of the iteration matrix
$\matalg{\Pi}\matalg{S}$ and the spectral radius $\rho(\matalg{\Pi}\matalg{S})$
by using the numerical benchmarks. The eigenvalues of the iteration matrix are
represented on Figure~\ref{fig:numResu:spectrum} for the three benchmarks with
the default parameters. The values of $1-\rho(\matalg{\Pi}\matalg{S})$ are given
in Table~\ref{tab:num:convRate} for different sets of parameters.
The eigenvalues and the spectral radius are obtained by using the function
\texttt{eigs} in \textsf{MATLAB}.

\begin{figure}[tb!]
  \centering
  \begin{subfigure}[b]{0.48\linewidth}
    \centering
    \caption{Benchmark 1 (plane wave)}
    \includegraphics[width=0.99\textwidth]{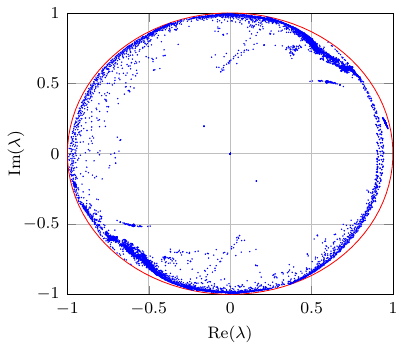}
  \end{subfigure}
  \hfill
  \begin{subfigure}[b]{0.48\linewidth}
    \centering
    \caption{Benchmark 2 (cavity)}
    \includegraphics[width=0.99\textwidth]{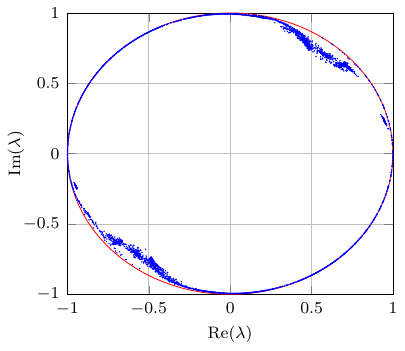}
  \end{subfigure}
  \\ \medskip
  \begin{subfigure}[b]{0.48\linewidth}
    \centering
    \caption{Benchmark 3 (waveguide)}
    \includegraphics[width=0.99\textwidth]{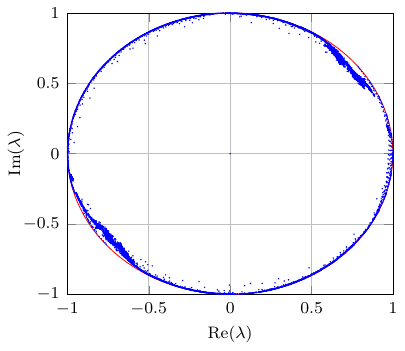}
  \end{subfigure}
  \caption{Spectrum of the iteration matrix $\matalg{\Pi}\matalg{S}$ of the fixed-point iterative scheme for the three benchmarks with the default parameters and the CHDG method. The unit circle is plotted in red.}
  \label{fig:numResu:spectrum}
\end{figure}

\begin{table}[!tb]
\caption{Spectral radius $\rho$ of the iteration matrix $\matalg{\Pi}\matalg{S}$ of the fixed-point iterative scheme for the three benchmarks with different parameters and the CHDG method.}
\begin{center}\vspace{-1em}\small
\begin{tabular}{|c|c@{\;\;}c@{\;\;}c|c@{\;\;}c@{\;\;}c|c@{\;\;}c@{\;\;}c|}
  \hline
  & \multicolumn{3}{c|}{Benchmark 1 (plane wave)} & \multicolumn{3}{c|}{Benchmark 2 (cavity)} & \multicolumn{3}{c|}{Benchmark 3 (waveguide)} \\
  \hline
  $\kappa$
    & $15\pi$ & $15\pi$ & $30\pi$
    & $7.1\sqrt{2}\pi$ & $7.1\sqrt{2}\pi$ & $7.01\sqrt{2}\pi$
    & $6\pi$ & $6\pi$ & $12\pi$ \\
  $h$
    & $1/16$ & $1/34$ & $1/34$
    & $1/10$ & $1/15$ & $1/15$
    & $1/8$ & $1/17$ & $1/17$ \\
  $\kappa h$
    & $2.95$ & $1.39$ & $2.77$
    & $3.15$ & $2.10$ & $2.08$
    & $2.36$ & $1.11$ & $2.22$ \\
  \hline
  {\scriptsize$1-\rho(\matalg{\Pi}\matalg{S})$}
    & $2.9\:10^{-3}$ & $7.8\:10^{-5}$ & $5.5\:10^{-4}$
    & $2.8\:10^{-4}$ & $1.5\:10^{-5}$ & $1.4\:10^{-5}$
    & $5.5\:10^{-5}$ & $2.5\:10^{-6}$ & $2.9\:10^{-5}$ \\
  \hline
\end{tabular}
\end{center}
\label{tab:num:convRate}
\end{table}

In all the cases, the eigenvalues are strictly inside the unit circle,
which is in agreement with the theoretical result.
We shall also observe in the next section that the iterative process effectively converges.
Nevertheless, some eigenvalues are close to the unit circle, so that the spectral radius
is close to one. For every benchmark, we observe that the spectral radius is
closer to one when using a finer mesh (second column of each benchmark in Table~\ref{tab:num:convRate})
or when using the second wavenumber with the fine mesh (third column).


\subsection{Comparison of DG, HDG and CHDG with standard iterative schemes}
\label{sec:iter:comp}

In practice, the iterative procedures to solve large-scale time-harmonic problems can be rather sophisticated, because the corresponding algebraic linear systems are generally non-Hermitian and ill-conditioned.
The GMRES \emph{(generalized minimal residual)} method with restart and preconditioning strategies
is one of the most widely used approach. For the standard version without restart, the
convergence is guaranteed, but the computational cost increases with the number iterations,
both in terms of memory storage and floating-point operations. Alternative Krylov methods are
frequently considered, with smaller computational cost per iteration and smaller memory
footprint, but at the price of a larger number of iterations and/or a convergence that is
not always guaranteed.

For the sake of brevity, we only consider three standard iterative schemes
to compare the DG methods: the fixed-point iterative scheme (for CHDG only),
the GMRES iteration
without restart and the CGNR \emph{(conjugate gradient normal)} method.
The CGNR iteration
corresponds to the conjugate gradient method applied to the normal equation $\matalg{A}^*\matalg{A} \vecalg{g}=\matalg{A}^*\vecalg{b}$.
For a given initial solution $\vecalg{g}^{(0)}$, both GMRES and CGNR produce an approximate
solution $\vecalg{g}^{(\ell)}$ at step $\ell$ that belongs to a certain Krylov subspace and
that minimizes the $2$-norm of the residual, i.e.
$\vecalg{g}^{(\ell)}$ minimizes $f(\vecalg{g}) = \|\vecalg{b}-\matalg{A}\vecalg{g}\|_2$.
The approximate solution belongs to
$\vecalg{g}^{(0)} + \mathcal{K}_\ell(\matalg{A},\vecalg{r}^{(0)})$
with GMRES and to
$\vecalg{g}^{(0)} + \mathcal{K}_\ell(\matalg{A}^*\matalg{A},\matalg{A}^*\vecalg{r}^{(0)})$
with CGNR, where $\mathcal{K}_\ell$ is the Krylov subspace of order $\ell$
(see e.g.~\cite{saad2003iterative}).
The convergence rate of the CGNR iterative process depends on the condition number of $\matalg{A}$.
The convergence can be slow if the condition number is large.
Nevertheless, we have observed that the condition number is nearly always smaller with CHDG
than with the other approaches (see Section~\ref{sec:num:condGlo}).

To study the efficiency of the iterative schemes with the different methods,
we consider the relative error of the physical fields defined as
\begin{align*}
  \sqrt{\frac{\left\|u_h - u_\mathrm{ref}\right\|_\Omega^2 + \left\|\bq_h - \bq_\mathrm{ref}\right\|_\Omega^2}{\left\|u_\mathrm{ref}\right\|_\Omega^2 + \left\|\bq_\mathrm{ref}\right\|_\Omega^2}},
\end{align*}
where $u_\mathrm{ref}$ and $\bq_\mathrm{ref}$ correspond to the reference analytical or
semi-analytical solution.
The history of relative error is plotted in Figure~\ref{fig:num:histo} for CGNR (lines with marker $\circ$), GMRES (lines with marker $\bullet$) and the fixed-point
iteration in the CHDG case (lines with marker $\times$). The results have been obtained
for DG without hybridization (green lines), HDG (red lines) and CHDG (blue lines).
The relative error obtained with a direct solver is indicated by the horizontal dashed line.

\begin{figure}[p]
  \centering
  \includegraphics[width=0.90\textwidth]{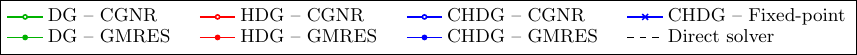}
  \\ \medskip
  \begin{subfigure}[b]{0.48\linewidth}
    \centering
    \caption{Benchmark 1 -- $\kappa=15\pi$ -- $h=1/16$}
    \label{fig:num:histo:1A}
    \includegraphics[width=0.99\textwidth]{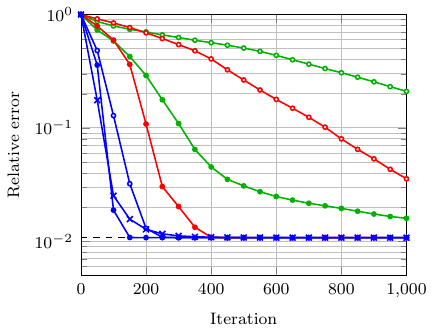}
  \end{subfigure}
  \hfill
  \begin{subfigure}[b]{0.48\linewidth}
    \centering
    \caption{Benchmark 1 -- $\kappa=30\pi$ -- $h=1/34$}
    \label{fig:num:histo:1B}
    \includegraphics[width=0.99\textwidth]{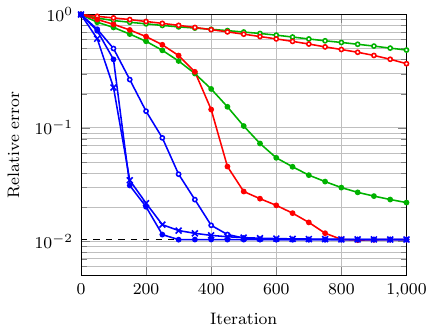}
  \end{subfigure}
  \\ \medskip
  \begin{subfigure}[b]{0.48\linewidth}
    \centering
    \caption{Benchmark 2 -- $\kappa=(7+1/10)\sqrt{2}\pi$ -- $h=1/10$}
    \label{fig:num:histo:2A}
    \includegraphics[width=0.99\textwidth]{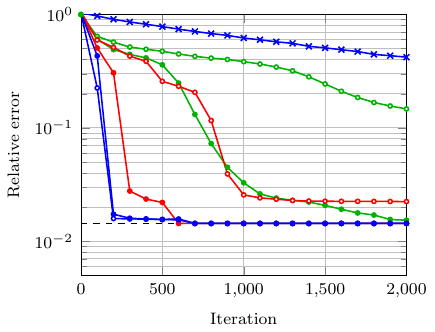}
  \end{subfigure}
  \hfill
  \begin{subfigure}[b]{0.48\linewidth}
    \centering
    \caption{Benchmark 2 -- $\kappa=(7+1/100)\sqrt{2}\pi$ -- $h=1/15$}
    \label{fig:num:histo:2B}
    \includegraphics[width=0.99\textwidth]{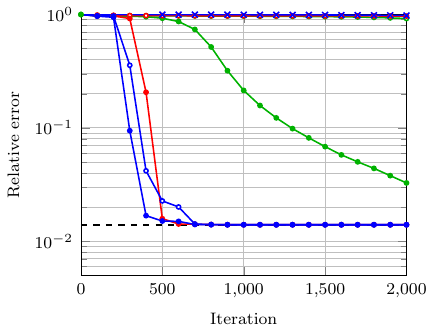}
  \end{subfigure}
  \\ \medskip
  \begin{subfigure}[b]{0.48\linewidth}
    \centering
    \caption{Benchmark 3 -- $\kappa=6\pi$ -- $h=1/8$}
    \label{fig:num:histo:3A}
    \includegraphics[width=0.99\textwidth]{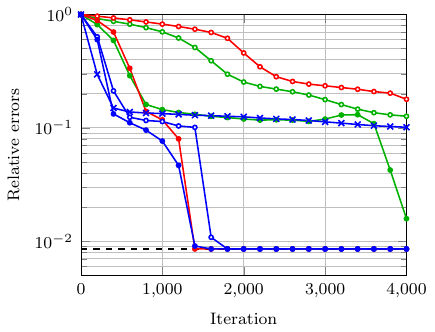}
  \end{subfigure}
  \hfill
  \begin{subfigure}[b]{0.48\linewidth}
    \centering
    \caption{Benchmark 3 -- $\kappa=12\pi$ -- $h=1/17$}
    \label{fig:num:histo:3B}
    \includegraphics[width=0.99\textwidth]{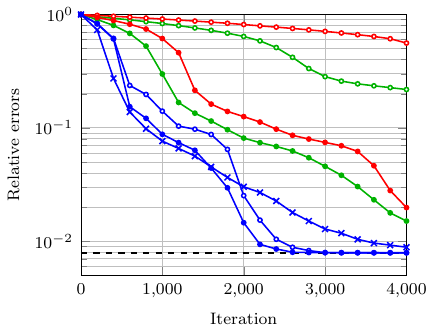}
  \end{subfigure}
  \\ \medskip
  \caption{Error history for the three benchmarks with different iterative schemes and different DG schemes.
  The dashed horizontal lines correspond to the relative errors obtained with a direct solver.}
  \label{fig:num:histo}
\end{figure}

First, let us analyze the results obtained with CHDG and fixed-point iterations
(blue lines with marker $\times$). The following observations can be made:
\begin{itemize}
  \item For benchmark 1 (plane wave), the convergence of the iterative process is very fast.
  The decay of error is slightly slower with the higher wavenumber.
  Compared to the other approaches, CHDG with fixed-point iterations provides nearly the fastest convergence.
  \item By contrast, for benchmark 2 (cavity), the convergence of the fixed-point iterations is very slow.
  This can be explained by the fact that this benchmark does not feature any physical absorption.
  Therefore, as discussed in Section \ref{sec:fixedpointpbm}, the only source of dissipation comes
  from the DG scheme.
  The decay of error is much slower for the wavenumber closer to the resonance.
  Compared to the other methods, this approach provides the slowest convergence.
  \item For benchmark 3 (half-open waveguide) with the first set of parameters (Figure~\ref{fig:num:histo:3A}), the relative error decays relatively rapidly during the 500 first iterations, then the decay slows down dramatically, and the relative error is only about $10^{-1}$ at iteration 4,000.
  With the higher wavenumber (Figure~\ref{fig:num:histo:3B}), the relative error decays more rapidly until approximately $10^{-2}$ at iteration 4,000.
  \item The asymptotic regime of convergence has been reached in three cases, and the slopes of error decay are coherent to the spectral radii obtained in Table~\ref{tab:num:convRate}: $\rho = 1 - 2.8\:10^{-4}$ for Figure~\ref{fig:num:histo:2A}, $\rho = 1 - 1.5\:10^{-5}$ for Figure~\ref{fig:num:histo:2B}, and $\rho = 1 - 5.5\:10^{-5}$ for Figure~\ref{fig:num:histo:3A}.
  The asymptotic regime starts at the beginning of the iterations in the cavity case.
\end{itemize}
To summarize, the fixed-point iterative process effectively converges for CHDG, but the performance strongly depends on the physical setting.
The convergence can be very fast for purely propagating cases, and very slow for cavity or waveguide cases.
In the latter cases, the asymptotic regime, which can start relatively quickly, is rather slow.

We then discuss
the convergence of the CGNR and GMRES schemes with the different approaches,
i.e.~DG without hybridization, HDG and CHDG.
We can make the following comments:
\begin{itemize}
\item
When using CGNR (lines with marker $\circ$ on Figure~\ref{fig:num:histo}),
the convergence is much faster with CHDG than with HDG and DG without hybridization
in all the cases. Comparing the last two approaches, the convergence is faster with
HDG than with DG without hybridization on Figures~\ref{fig:num:histo:1A}, \ref{fig:num:histo:1B}
and \ref{fig:num:histo:2A}, and the converse is true on Figures~\ref{fig:num:histo:2B},
\ref{fig:num:histo:3A} and \ref{fig:num:histo:3B}.
\item
When using GMRES (lines with marker $\bullet$ on Figure~\ref{fig:num:histo}), the fastest convergence is still obtained with CHDG in all the cases, but the convergence is rather close with HDG for the cavity benchmark (Figures~\ref{fig:num:histo:2A}-\ref{fig:num:histo:2B}) and the first waveguide benchmark (Figures~\ref{fig:num:histo:3A}).
The convergence is generally faster with HDG than with DG without hybridization, but the converse is true for the second waveguide benchmark (Figures~\ref{fig:num:histo:3B}).
\end{itemize}
To summarize, if the problem is solved with either CGNR or GMRES, the convergence of the iterative process is always faster with the CHDG method.
Using the standard HDG method generally speeds up the convergence in comparison with the DG method without hybridization, but the converse is true for several cases.

Finally, let us compare the performance of CGNR and GMRES when the CHDG method is used
(blue lines with markers $\circ$ and $\bullet$ on Figure~\ref{fig:num:histo}).
The convergence is always slightly faster with GMRES than with CGNR, but the difference
is not very large. In the worst case (Figure~\ref{fig:num:histo:1B}), the number of
iterations to achieve the reference relative error (obtained with the direct solver) is
twice larger with CGNR than with GMRES. Considering the computational cost of GMRES, which
increases at each iteration, the CGNR is a potential good candidate for realistic cases.
The complete analysis of the runtimes and computational costs, which depend on the implementation,
will be performed in future works


\section{Conclusion}
\label{sec:conclu}

In this work, we propose a new hybridization technique, which we call the CHDG method,
for solving time-harmonic problems with upwind DG discretizations.
The auxiliary unknowns used in the CHDG method correspond to characteristic variables,
whereas the auxiliary unknowns used in the standard approach correspond to a Dirichlet trace.
At the price of increasing the required memory storage for the reduced linear system,
this choice largely improves its properties
and makes it more suitable for iterative solution procedures.

We study the properties of the local element-wise problems
and the global linear systems for the standard HDG method and the CHDG method.
In order to investigate how the original DG scheme and its hybridized versions interplay
with usual iterative solvers, we provide a set of 2D numerical results
where the auxiliary unknowns are discretized with scaled Legendre basis functions.
The key properties of the CHDG may be summarized as follows.

With CHDG, the reduced system can be written in the form $(\TI-\Pi\TS)g=b$,
where the operator $\Pi\TS$ is a strict contraction.
It can be solved with a fixed-point iteration without relaxation.
This fixed-point iteration converges quickly in open domains, but unfortunately,
the convergence becomes slow when waves are trapped, like in waveguides or cavities.

The memory storage required to store an unknown vector of the reduced system
is twice larger with CHDG than with the standard HDG method.
Similarly, the number of non-zero entries in the CHDG matrix is multiplied by about
1.6 in 2D and 1.4 in 3D as compared to the HDG matrix, with a similar filling pattern.
In return, the condition number of the matrices of the local element-wise systems
is always smaller with CHDG than with the standard HDG method.
Similarly, the condition number of the global reduced matrix
is also always smaller with CHDG than with HDG.
It is also smaller than the condition number of the global matrix
of the DG system without hybridization.

For the iterative solution procedure, we have employed the usual GMRES iteration
(without restart) and the CGNR iteration.
In both cases, the convergence of the iterative process is always faster with CHDG
than with HDG or DG without hybridization.
Focusing on the CHDG system, the number of CGNR iterations is always larger
than the number of GMRES iterations, but the difference is rather limited for the
benchmarks considered in this article. Since restart must be employed for GMRES in practice,
and since each GMRES iteration is typically more costly than the corresponding CGNR iteration,
we believe that CGNR may be a competitive approach to solve the CHDG system.

Although we focus on 2D benchmarks here, the definition of the method is valid for 3D cases.
Besides, the method is in principle not restricted to scalar problems, and
electromagnetic or elastic waves should be accessible as well because similar DG
schemes with upwind fluxes are already available. In future works, we will
investigate in more depth the computational aspects for solving iteratively 3D cases,
high-order transmission conditions, and combinations with preconditioning techniques
and domain decomposition methods to accelerate further the convergence of the iterative
solution procedures.


\paragraph{Acknowledgments.}
This work was supported in part by the ANR JCJC project \emph{WavesDG} (research grant ANR-21-CE46-0010).
The authors thank X.~Antoine, H.~B\'eriot, X.~Claeys, G.~Gabard, C.~Geuzaine and S.~Pescuma for helpful discussions and remarks.

\small
\setlength{\bibsep}{0pt plus 0ex}
\bibliographystyle{abbrvnat}
\bibliography{myrefs}
\addcontentsline{toc}{section}{References}

\end{document}